\numberwithin{equation}{section}
\newtheorem{Theorem}{Theorem}[section]
\newtheorem{Corollary}[Theorem]{Corollary}
\newtheorem{Lemma}[Theorem]{Lemma}
\newtheorem{Proposition}[Theorem]{Proposition}
 { \theoremstyle{definition}
\newtheorem{Example}[Theorem]{Example}
\newtheorem{Remark}[Theorem]{Remark} }
\begin{document}

\allowdisplaybreaks

\newcommand{\arXivNumber}{1611.05674}

\renewcommand{\PaperNumber}{027}

\FirstPageHeading

\ShortArticleName{Hopf Algebras which Factorize through the Taft Algebra $T_{m^{2}}(q)$}

\ArticleName{Hopf Algebras which Factorize through the Taft\\ Algebra $\boldsymbol{T_{m^{2}}(q)}$ and the Group Hopf Algebra $\boldsymbol{K[C_{n}]}$}

\Author{Ana-Loredana AGORE~$^{\dag\ddag}$}

\AuthorNameForHeading{A.-L.~Agore}

\Address{$^\dag$~Faculty of Engineering, Vrije Universiteit Brussel, Pleinlaan 2, B-1050 Brussels, Belgium}
\EmailD{\href{mailto:ana.agore@vub.be}{ana.agore@vub.be}, \href{mailto:ana.agore@gmail}{ana.agore@gmail}}
\URLaddressD{\url{http://homepages.vub.ac.be/~aagore/welcome.html}}
\Address{$^\ddag$~``Simion Stoilow'' Institute of Mathematics of the Romanian Academy,\\
\hphantom{$^\ddag$}~P.O.~Box 1-764, 014700 Bucharest, Romania}

\ArticleDates{Received August 28, 2017, in final form March 20, 2018; Published online March 25, 2018}

\Abstract{We completely describe by generators and relations and classify all Hopf algebras which factorize through the Taft algebra $T_{m^{2}}(q)$ and the group Hopf algebra $K[C_{n}]$: they are $nm^{2}$-dimensional quantum groups $T_{nm^{2}}^ {\omega}(q)$ associated to an $n$-th root of unity~$\omega$. Furthermore, using Dirichlet's prime number theorem we are able to count the number of isomorphism types of such Hopf algebras. More precisely, if $d = {\rm gcd}(m,\nu(n))$ and $\frac{\nu(n)}{d} = p_1^{\alpha_1} \cdots p_r^{\alpha_r}$ is the prime decomposition of~$\frac{\nu(n)}{d}$ then the number of types of Hopf algebras that factorize through $T_{m^{2}}(q)$ and $K[C_n]$ is equal to $(\alpha_1 + 1)(\alpha_2 + 1) \cdots (\alpha_r + 1)$, where $\nu (n)$ is the order of the group of $n$-th roots of unity in $K$. As a consequence of our approach, the automorphism groups of these Hopf algebras are described as well.}

\Keywords{bicrossed product; the factorization problem; classification of Hopf algebras}

\Classification{16T10; 16T05; 16S40}

\section{Introduction}
The factorization problem was considered in many settings ranging from groups \cite{acim, Ore, Takeuchi} to (co)algebras \cite{cap, CIMZ, Pena2}, Lie algebras \cite{majid3, Mic}, locally compact quantum groups \cite{VV} or fusion categories~\cite{SG}. We refer the reader to \cite{abm} and the references therein for a thorough overview on recent developments. The present paper is a contribution to the factorization problem for Hopf algebras. For two given Hopf algebras $A$ and $H$, it can be stated as follows:

\textit{Describe and classify up to an isomorphism all Hopf algebras that factorize through~$A$ and~$H$, i.e., all Hopf algebras $E$ for which there exist injective Hopf algebra maps $i\colon A \to E$ and \smash{$j\colon H\to E$} such that the following map is bijective:}
\begin{gather*}
A \otimes H \to E, \qquad a \otimes y \mapsto i(a) j(y).
\end{gather*}
Like many other notions or major structural results in Hopf algebras, the factorization problem originates in group theory~\cite{Ore}. Despite its elementary statement, studying the factorization problem presents many challenges even in the case of groups. Perhaps the most natural question to consider is that of describing and classifying groups which factorize through two finite cyclic groups. Surprisingly, after being investigated in
many papers it remains still an open question (see~\cite{acim} for a more detailed account on the subject). However, the important achievements obtained in the group setting inspired a new approach to this problem in the Hopf context as well. Probably the most influential papers in this direction were~\cite{zappa} and~\cite{Takeuchi} which introduce the bicrossed product associated to a matched pair of groups as an equivalent way of tackling the factorization problem. The generalization of this construction at the level of Hopf algebras proposed by Majid in~\cite{majid}
opened the way for a more systematic approach. More precisely, with this new construction at hand, the factorization problem comes down to computing all matched pairs of Hopf algebras between~$A$ and~$H$ and classifying the corresponding bicrossed products (see Section~\ref{prel} for the precise definitions). The line of inquiry proposed in \cite{abm} follows this path and has proved itself to be an efficient strategy for addressing the
classification of bicrossed products. The first argument supporting this idea is the example presented in~\cite{abm} which completely describes and classifies all bicrossed products between Sweedler's Hopf algebra and the group Hopf algebra $K[C_{n}]$, where $C_n$ denotes the cyclic group of order~$n$. Furthermore, the method introduced in \cite{abm} was successfully used in \cite{Gabi} and \cite{aa} in order to describe and classify all
Hopf algebras which factorize through two Sweedler's Hopf algebras and respectively two Taft Hopf algebras. Besides from being of interest in its own right by contributing to a~better understan\-ding of Hopf algebras and their classification, the study of matched pairs and their corresponding bicrossed products offers many interesting applications. For instance, the classification of bicrossed products obtained in~\cite{abm} is the main ingredient in describing the automorphism group of the Drinfel'd double of a~purely non-abelian finite group~\cite{marc1}. Furthermore, in a more recent paper, matched pairs of cocommutative Hopf algebras are used to construct solutions for the quantum Yang--Baxter equation (see \cite[Proposition~3.2]{agv}).

The present paper is in some sense a sequel to \cite{abm} and investigates Hopf algebras which factorize through the Taft algebra $T_{m^{2}}(q)$ and the group Hopf algebra $K[C_{n}]$. It is organized as follows: Section~\ref{prel} contains some background material needed afterwards. We review some useful notions such as that of matched pair of Hopf algebras and the corresponding bicrossed product as well as the statements of some important theorems. Section~\ref{2} contains the main results. Let $n, m \in \mathbb{N}$, $n \geqslant 2$, $m \geqslant 3$ and let $K$ be a field
which contains a primitive $m$-th root of unity~$q$. $U_n (K) = \{\omega \in K \,|\, \omega^n = 1 \}$ denotes the cyclic group of $n$-th roots of unity in $K$ of order $\nu (n) = |U_n (K)|$ and $T_{m^{2}}(q)$ is the Taft Hopf algebra. First we compute in Proposition~\ref{mapex1} all possible matched pairs $(T_{m^{2}}(q), K[C_n], \triangleleft, \triangleright)$. It turns out that these matched pairs are in bijection with the cyclic group $U_n (K)$: the right action $\triangleleft \colon K[C_{n}] \otimes T_{m^{2}}(q) \to K[C_{n}]$ is trivial while the left action $\triangleright \colon K[C_{n}] \otimes T_{m^{2}}(q) \to T_{m^{2}}(q)$ is implemented by an $n$-th root of unity~$\omega$. Corollary~\ref{primaclasi} shows that a Hopf algebra $E$ factorizes through $T_{m^{2}}(q)$ and $K[C_n]$ if and only if $E \cong T_{nm^{2}}^ {\omega}(q)$, where $T_{nm^{2}}^ {\omega}(q)$ is a~quantum group associated to an $n$-th root of unity $\omega$ which we completely describe by generators and relations. We point out that the Hopf algebras $T_{nm^{2}}^ {\omega}(q)$ are in particular pointed, of rank one. Pointed Hopf algebras were also investigated in \cite{K,S,W} where several classification results are obtained. However, they are not as explicit and detailed as those provided in the present paper and, moreover, the problem of counting the isomorphism types of these Hopf algebras is not considered. Theorem~\ref{izo} gives necessary and sufficient conditions for two Hopf algebras $T_{nm^{2}}^ {\omega}(q)$ and respectively $T_{nm^{2}}^ {\omega'}(q)$, $\omega, \omega ' \in U_n (K)$, to be isomorphic. The
preceding result, together with Dirichlet's prime number theorem, is then used to count the number of isomorphism types of the aforementioned
Hopf algebras. If $d = {\rm gcd}(m, \nu (n))$ and $\frac{\nu(n)}{d} = p_1^{\alpha_1} \cdots p_r^{\alpha_r}$ is the prime decomposition of
$ \frac{\nu(n)}{d}$ then the number of types of Hopf algebras that factorize through $T_{m^{2}}(q)$ and $K[C_n]$ is equal to $(\alpha_1 + 1)(\alpha_2 + 1) \cdots (\alpha_r + 1)$. This generalizes \cite[Theorem~5.10]{abm}. As a direct consequence of this approach we obtain in Theorem~\ref{graut} a~description of the automorphism group of~$T_{nm^{2}}^ {\omega}$.

\section{Preliminaries}\label{prel}
Throughout $K$ will be a field. For a positive integer $n$ we denote by $U_n (K) = \{ \omega \in K \,|\, \omega^n = 1\}$ the cyclic group of $n$-th roots of unity in $K$, and by $\nu (n) = |U_n (K)|$ its order. Obviously $\nu (n)$ is a divisor of $n$; if $\nu (n) = n$, then any generator of $U_n (K)$ is called a~primitive $n$-th root of unity. In the sequel~$C_{n}$ will be the cyclic group of order $n$ generated by $g$ and $K[C_{n}]$ the corresponding group Hopf algebra. For any~$i,j \in \mathbb{N}$, $\delta_{i, j}$ denotes the Kronecker delta.

Let $m \in \mathbb{N}$, $m \geq 2$. Whenever we deal with Taft algebras of order $m^{2}$ we assume that the base field $K$ contains a~primitive $m$-th root of unity $q$. $T_{m^{2}}(q)$ denotes the Taft Hopf algebra of order $m$ over $K$, which is generated as an algebra by two elements~$h$ and~$x$ subject to the relations $h^{m} = 1$, $x^{m} = 0$ and $xh = qhx$. The coalgebra structure and antipode are given as follows
\begin{gather*}
\Delta(h) = h \otimes h, \qquad \Delta(x) = x \otimes h + 1 \otimes x, \qquad \epsilon(h) = 1,\\
 \epsilon(x) = 0,\qquad S(h) = h^{-1},\qquad S(x) = -xh^{-1}.
\end{gather*}
that is, $h$ is a group like element while $x$ is $(h,1)$-primitive. Sweedler's Hopf algebra is obtained by considering $m=2$ and $q = -1$. It can be easily checked that $\{h^{i}x^{j}\}_{0 \leq i, j \leq m-1}$ is a $K$-linear basis of the Taft algebra, the set of group-like elements is
$\mathcal{G}\big(T_{m^{2}}(q)\big) = \{1, h, h^{2},\dots, h^{m-1}\}$ and the primitive elements $\mathcal{P}_{h^{j}, 1}\big(T_{m^{2}}(q)\big)$ are given as follows for any $j = \{0, 1,\dots, m-1\}$
\begin{gather*}\mathcal{P}_{h^{j}, 1} \big(T_{m^{2}}(q)\big) = \begin{cases}
\alpha\big(h^{j} - 1\big), & \text{if } j \neq 1,\\
\beta(h-1)+ \gamma x, & \text {if } j=1,
\end{cases}\qquad \text{for some} \ \alpha, \beta, \gamma \in K.\end{gather*}
Unless specified otherwise, all algebras, coalgebras, bialgebras, Hopf algebras, tensor products and homomorphisms are over~$K$. For a coalgebra $(C, \Delta, \varepsilon)$, we use Sweedler's $\Sigma$-notation: $\Delta(c) = c_{(1)}\otimes c_{(2)}$, $(I\otimes \Delta)\Delta(c) = c_{(1)}\otimes
c_{(2)}\otimes c_{(3)}$, etc.\ (summation understood). Let $A$ and $H$ be two Hopf algebras. $H$ is called a right $A$-module coalgebra if $H$ is a coalgebra in the monoidal category ${\mathcal M}_A $ of right $A$-modules, i.e., there exists $\triangleleft \colon H \otimes A \rightarrow H$ a morphism of coalgebras such that $(H,\triangleleft) $ is a right $A$-module. A morphism between two right $A$-module coalgebras $(H, \triangleleft)$ and $(H',\triangleleft')$ is a morphism of coalgebras $\psi\colon H \to H'$ that is also right $A$-linear. Furthermore, $\psi$~is called unitary if
$\psi (1_H) = 1_{H'}$. Similarly, $A$ is a left $H$-module coalgebra if $A$ is a coalgebra in the monoidal category of left $H$-modules, that is there exists $\triangleright \colon H \otimes A \to A$ a morphism of coalgebras such that $(A, \triangleright)$ is also a~left $H$-module. The actions $\triangleleft \colon H \otimes A \rightarrow H$, $\triangleright: H \otimes A \rightarrow A$ are called trivial if $y \triangleleft a = \varepsilon_A (a) y$ and respectively $y \triangleright a = \varepsilon_H(y) a$, for all $a\in A$ and $y\in H$.

A Hopf algebra $E$ \emph{factorizes} through two Hopf algebras $A$ and $H$ if there exist injective Hopf algebra maps $i \colon A \to E $
and $j \colon H\to E$ such that the following map is bijective:
\begin{gather*}
A \otimes H \to E,\qquad a \otimes y \mapsto i(a) j(y).
\end{gather*}
A crucial result which characterizes Hopf algebras that factorize through two given Hopf algebras in terms of bicrossed products was proved by Majid (see for instance \cite[Theorem~7.2.3]{majid2}). We recall briefly the construction of the \emph{bicrossed product} of two Hopf algebras. It was introduced by Majid in \cite[Proposition 3.12]{majid} under the name of \emph{double cross product}. Throughout we shall adopt the name of bicrossed product from \cite[Theorem~IX 2.3]{Kassel} since it has also been used for similar constructions in many other fields (see, e.g., the group case \cite{Takeuchi}). The main ingredient for constructing bicrossed products are the so-called matched pairs. A \textit{matched pair} of Hopf algebras is a~quadruple $(A, H, \triangleleft, \triangleright)$, where $A$ and $H$ are Hopf algebras, $\triangleleft \colon H \otimes A \rightarrow H$,
$\triangleright\colon H \otimes A \rightarrow A$ are coalgebra maps such that $(A, \triangleright)$ is a left $H$-module coalgebra, $(H, \triangleleft)$ is a~right $A$-module coalgebra and the following compatibilities hold for any $a, b\in A$, $y, z\in H$
\begin{gather}
y \triangleright 1_{A} {=} \varepsilon_{H}(h)1_{A}, \qquad 1_{H} \triangleleft a = \varepsilon_{A}(a)1_{H}, \label{mp1} \\
y \triangleright(ab) {=} (y_{(1)} \triangleright a_{(1)}) \big( \big(y_{(2)}\triangleleft a_{(2)}\big)\triangleright b \big),\label{mp2} \\
(y z) \triangleleft a {=} \big( y \triangleleft \big(z_{(1)}\triangleright a_{(1)}\big) \big) (z_{(2)} \triangleleft a_{(2)}),\label{mp3} \\
y_{(1)} \triangleleft a_{(1)} \otimes y_{(2)} \triangleright a_{(2)} {=} y_{(2)} \triangleleft a_{(2)} \otimes y_{(1)}\triangleright a_{(1)}.\label{mp4}
\end{gather}
If $(A, H, \triangleleft, \triangleright)$ is a matched pair of Hopf algebras then the corresponding \textit{bicrossed product of Hopf algebras} $A \bowtie H$ is the tensor coalgebra $A\otimes H$ with the multiplication and antipode given as follows
\begin{gather*}
(a \bowtie y) \cdot (b \bowtie z) = a \big(y_{(1)}\triangleright b_{(1)}\big) \bowtie \big(y_{(2)} \triangleleft b_{(2)}\big) z, \\
S_{A \bowtie H} ( a \bowtie y ) = S_H \big(y_{(2)}\big) \triangleright S_A \big(a_{(2)}\big) \bowtie S_H \big(y_{(1)}\big) \triangleleft S_A \big(a_{(1)}\big)
\end{gather*}
for all $a,b \in A$, $y,z\in H$, where we denote $a\otimes y$ by $a\bowtie y$.

Although there are many interesting examples of bicrossed products such as the Drinfel'd double \cite[Theorem~IX.3.5]{Kassel} or the generalized quantum double \cite[Example~7.2.6]{majid} we will restrict to presenting only one example which is important for our purposes, namely the semi-direct (smash) product of Hopf algebras.

\begin{Example} \label{exempleban}
Let $(A, \triangleright)$ be a left $H$-module coalgebra and consider $H$ as a right $A$-module coalgebra via the trivial action, i.e., $y \triangleleft a = \varepsilon_A(a) y$. Then $(A,H, \triangleleft, \triangleright)$ is a matched pair of Hopf algebras if and only if $(A, \triangleright)$ is also a left $H$-module algebra and the following compatibility condition holds for all $y \in H$ and $a\in A$
\begin{gather*}
y_{(1)} \otimes y_{(2)} \triangleright a = y_{(2)} \otimes y_{(1)} \triangleright a.
\end{gather*}
In this case, the associated bicrossed product $A\bowtie H = A\# H$ is the left version of the \emph{semi-direct $($smash$)$ product} of Hopf algebras as defined by Molnar~\cite{Mo} in the cocommutative Hopf algebra setting where the compatibility condition~\eqref{mp4} holds automatically. Thus, $A\# H$ is the tensor coalgebra $A\otimes H$, with the following multiplication
\begin{gather*}
(a \# y) \cdot (b \# z):= a \big(y_{(1)} \triangleright b \big) \# y_{(2)} z
\end{gather*}
for all $a, b\in A$, $y, z\in H$, where we denote $a\otimes y$ by $a\# y$. Similarly one can define the right version of the smash product of Hopf algebras by considering $A$ as a left $H$-module coalgebra via the trivial action, i.e., $y \triangleright a := \varepsilon_H(y) a$.
\end{Example}

The next result, proved by Majid, establishes the connection between Hopf algebras which factorize through two given Hopf algebras and bicrossed products.

\begin{Theorem}[{\cite[Theorem 7.2.3]{majid2}}]\label{carMaj}
Consider two Hopf algebras $A$ and $H$. A Hopf algebra $E$ factorizes through $A$ and $H$ if and only if there exists a matched pair of Hopf algebras $(A, H, \triangleleft, \triangleright)$ and an isomorphism of Hopf algebras $E \cong A \bowtie H$.
\end{Theorem}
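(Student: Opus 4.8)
The plan is to prove the two implications separately, with the bijectivity of the multiplication map as the recurring tool. For the reverse implication, suppose $(A, H, \triangleleft, \triangleright)$ is a matched pair and put $E = A \bowtie H$. I would take the canonical maps $i\colon A \to E$, $i(a) = a \bowtie 1_H$, and $j\colon H \to E$, $j(y) = 1_A \bowtie y$, and check that they are injective morphisms of Hopf algebras. They are coalgebra maps because $E$ carries the tensor coalgebra structure, and a direct computation with the bicrossed product multiplication, using \eqref{mp1} together with the unit axioms of the two module structures, shows that $(a\bowtie 1_H)(b \bowtie 1_H) = ab \bowtie 1_H$ and $(1_A \bowtie y)(1_A\bowtie z) = 1_A \bowtie yz$, so that $i$ and $j$ are algebra maps and hence bialgebra maps; compatibility with the antipode is then automatic, since any bialgebra map between Hopf algebras commutes with antipodes. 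The same unit relations give $i(a)j(y) = (a \bowtie 1_H)(1_A \bowtie y) = a \bowtie y$, so the multiplication map is the identity on $A \otimes H$ and is therefore bijective, proving that $E$ factorizes through $A$ and $H$.

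For the forward implication, suppose $E$ factorizes through $A$ and $H$ via injective Hopf algebra maps $i$ and $j$, and identify $A$ and $H$ with their images. Since $i$ and $j$ are coalgebra maps, the multiplication map $\phi\colon A \otimes H \to E$, $a \otimes y \mapsto i(a)j(y)$, is a morphism of coalgebras from the tensor coalgebra $A \otimes H$; being bijective by hypothesis, it is an isomorphism of coalgebras. The essential point is that bijectivity also lets me \emph{define} the two actions: the flipped product $j(y)i(a) \in E$ has a unique factored expression, which I write as
\begin{gather*}
j(y)i(a) = i\big(y_{(1)} \triangleright a_{(1)}\big)\, j\big(y_{(2)} \triangleleft a_{(2)}\big),
\end{gather*}
and reading off the two tensor factors (after applying $\varepsilon_H$ or $\varepsilon_A$ where appropriate) yields $\triangleright\colon H \otimes A \to A$ and $\triangleleft\colon H \otimes A \to H$. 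Transporting the multiplication of $E$ along $\phi^{-1}$ then produces a product on $A \otimes H$, and the displayed formula is exactly what identifies this product with the bicrossed product multiplication, so that $\phi$ becomes an isomorphism of Hopf algebras $E \cong A \bowtie H$ once the matched pair axioms are confirmed.

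Confirming those axioms is the most delicate part of the argument. That $(A, \triangleright)$ is a left $H$-module coalgebra and $(H, \triangleleft)$ a right $A$-module coalgebra follows by specialising the defining relation to $y = 1_H$ or $a = 1_A$ and by using coassociativity in $E$; the unit compatibilities \eqref{mp1} come from the same specialisations. For \eqref{mp2} and \eqref{mp3} I would expand the associative triple products $j(y)i(ab)$ and $j(yz)i(a)$ in two ways and match the resulting factored expressions, while \eqref{mp4} is obtained by applying $\Delta_E$ to $j(y)i(a)$ and equating the two sides. The main obstacle will be bookkeeping rather than conception: every step expands products and coproducts in Sweedler notation and then appeals to the uniqueness of the factorization to compare coefficients, so the whole verification rests on the bijectivity of $\phi$ as the only device for translating identities in $E$ into identities in $A \otimes H$.
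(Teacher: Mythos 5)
You should first be aware that the paper offers no proof of Theorem~\ref{carMaj} to compare against: the result is imported verbatim from \cite[Theorem~7.2.3]{majid2} and used as a black box. Judged on its own merits, your argument is the standard proof of Majid's theorem and its architecture is correct. The reverse implication is complete as written: \eqref{mp1} together with the unit axioms of the two module structures gives $(a \bowtie 1_H)(b \bowtie 1_H) = ab \bowtie 1_H$, $(1_A \bowtie y)(1_A \bowtie z) = 1_A \bowtie yz$ and $i(a)j(y) = a \bowtie y$, and the fact that a bialgebra morphism between Hopf algebras automatically commutes with the antipodes is indeed standard.

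The forward implication contains the one step where you assert more than you have justified. Bijectivity of $\phi$ gives you only an element $\Psi(y \otimes a) := \phi^{-1}\big(j(y)i(a)\big)$ of $A \otimes H$, i.e., some finite sum $\sum_k a_k \otimes y_k$; uniqueness of the factorization alone does not allow you to ``write'' this element in the diagonal Sweedler form
\begin{gather*}
\Psi(y \otimes a) = \big(y_{(1)} \triangleright a_{(1)}\big) \otimes \big(y_{(2)} \triangleleft a_{(2)}\big), \qquad \triangleright := ({\rm id}_A \otimes \varepsilon_H)\circ \Psi, \qquad \triangleleft := (\varepsilon_A \otimes {\rm id}_H)\circ \Psi,
\end{gather*}
and this identity is what every one of your subsequent ``compare the factored expressions'' steps relies on. The missing justification is precisely where the coalgebra structure must be invoked: $\Psi = \phi^{-1} \circ m_E \circ (j \otimes i)$ (with $m_E$ the multiplication of $E$) is a composition of coalgebra maps --- you did establish that $\phi$, hence $\phi^{-1}$, is an isomorphism of coalgebras --- so $\Delta_{A \otimes H} \circ \Psi = (\Psi \otimes \Psi) \circ \Delta_{H \otimes A}$, and applying ${\rm id}_A \otimes \varepsilon_H \otimes \varepsilon_A \otimes {\rm id}_H$ to both sides collapses the left-hand side to $\Psi(y \otimes a)$ and the right-hand side to $\big(y_{(1)} \triangleright a_{(1)}\big) \otimes \big(y_{(2)} \triangleleft a_{(2)}\big)$. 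Once this lemma is in place, the rest of your plan --- transporting the product of $E$ along $\phi$, then extracting the module axioms and \eqref{mp1}--\eqref{mp4} from (co)associativity of $E$ and unit specializations --- is indeed the routine bookkeeping you describe, and it is how the proof cited by the paper proceeds. So: correct and standard approach, with one genuine missing idea, namely that ``unique factorization'' must be supplemented by ``$\Psi$ is a coalgebra morphism'' before the Sweedler form of the flipped product makes sense.
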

In light of Theorem~\ref{carMaj} the factorization problem for Hopf
algebras can be restated in terms of matched pairs and bicrossed products as follows:

\textit{For two given Hopf algebras $A$ and $H$, describe the set of all matched pairs $(A, H, \triangleright, \triangleleft)$ and classify
up to an isomorphism the corresponding bicrossed products $A \bowtie H$.}

The strategy we employ for classifying semi-direct products will rely on the following result which is a special case of \cite[Corollary~3.3]{abm}.

\begin{Theorem}\label{toatemorf}
Consider $A \# H$ and $A \#' H$ to be two semi-direct products associated to the left actions $\triangleright \colon H\otimes A \to A$ and
respectively $\triangleright' \colon H\otimes A \to A$. Then there exists a~bijective correspondence between the set of all morphisms of Hopf
algebras $\psi \colon A \# H \to A \# ' H $ and the set of all quadruples $(u, p, r, v)$, consisting of two unitary coalgebra maps $u\colon A \to A$ and $r\colon H \rightarrow A$, and two Hopf algebra maps $p\colon A \to H$ and $v\colon H \rightarrow H$ subject to the following compatibilities
\begin{gather}
u\big(a_{(1)}\big) \otimes p\big(a_{(2)}\big) = u\big(a_{(2)}\big) \otimes p\big(a_{(1)}\big),\label{C1}\\
r\big(t_{(1)}\big) \otimes v\big(t_{(2)}\big) = r\big(t_{(2)}\big) \otimes v\big(t_{(1)}\big),\label{C2}\\
u(ab) = u\big(a_{(1)}\big) \big( p \big(a_{(2)}\big) \triangleright' u(b) \big),\label{C3}\\
r(tw) = r\big(t_{(1)}\big) \big(v\big(t_{(2)}\big) \triangleright' r(w)\big),\label{C4}\\
r\big(t_{(1)}\big) \big(v\big(t_{(2)}\big) \triangleright' u(b) \big) = u \big(t_{(1)} \triangleright b_{(1)}\big) \big( p \big(t_{(2)}
\triangleright b_{(2)}\big) \triangleright' r\big(t_{(3)}\big) \big) ,\label{C5}\\
v(t) p (b)= p \big(t_{(1)} \triangleright b\big) v \big(t_{(2)}\big)\label{C6}
\end{gather}
for all $a, b \in A$, $t, w \in H$.

Under the above correspondence the morphism of Hopf algebras $\psi\colon A \# H \to A \# ' H $ corresponding to $(u, p, r, v)$ is given by
\begin{gather*}
\psi_{(u, p,r, v)}(a \# t) = u\big(a_{(1)}\big) \big( p\big(a_{(2)}\big) \triangleright' r\big(t_{(1)}\big) \big) \#' p\big(a_{(3)}\big) v\big(t_{(2)}\big)
\end{gather*}
for all $a \in A$ and $t\in H$.
\end{Theorem}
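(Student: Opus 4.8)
The plan is to exploit that $A\#H$ and $A\#'H$ share the same underlying tensor coalgebra $A\otimes H$, and to reconstruct an arbitrary Hopf algebra map $\psi$ from its components relative to the canonical inclusions and projections of a smash product. I would first record four structure maps: the inclusions $i_A\colon A\to A\#H$, $a\mapsto a\#1$, and $i_H\colon H\to A\#H$, $t\mapsto 1\#t$, which are Hopf algebra maps; the projection $\pi_H\colon A\#'H\to H$, $b\#'z\mapsto\varepsilon_A(b)z$, which is again a Hopf algebra map (a one-line computation with the smash multiplication of Example~\ref{exempleban} shows it is multiplicative, the counit property of $\triangleright'$ being the only input); and the projection $\pi_A\colon A\#'H\to A$, $b\#'z\mapsto b\,\varepsilon_H(z)$, which is merely a unitary coalgebra map. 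Given $\psi$, I then set
\begin{gather*}
u=\pi_A\circ\psi\circ i_A,\qquad p=\pi_H\circ\psi\circ i_A,\qquad r=\pi_A\circ\psi\circ i_H,\qquad v=\pi_H\circ\psi\circ i_H.
\end{gather*}
As composites of coalgebra maps these are unitary coalgebra maps, and since $i_A$, $i_H$, $\pi_H$, $\psi$ are Hopf algebra maps, $p$ and $v$ are Hopf algebra maps.

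The engine of the \emph{forward direction} is a decomposition lemma: for any coalgebra map $f\colon C\to D\otimes E$ into a tensor-product coalgebra one has $f(c)=\pi_D f(c_{(1)})\otimes\pi_E f(c_{(2)})$, where $\pi_D=\mathrm{id}\otimes\varepsilon$ and $\pi_E=\varepsilon\otimes\mathrm{id}$; this is checked by substituting $(f\otimes f)\Delta_C=\Delta_{D\otimes E}f$ and applying $\mathrm{id}\otimes\varepsilon$ and $\varepsilon\otimes\mathrm{id}$ to the two legs. Applied to $\psi\circ i_A$ and $\psi\circ i_H$ it gives $\psi(a\#1)=u(a_{(1)})\#'p(a_{(2)})$ and $\psi(1\#t)=r(t_{(1)})\#'v(t_{(2)})$. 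Multiplicativity of $\psi$ then forces the announced formula, because $\psi(a\#t)=\psi(a\#1)\psi(1\#t)$ and expanding this product via the smash multiplication of $A\#'H$ reproduces $\psi_{(u,p,r,v)}(a\#t)$ verbatim.

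It remains to read off the compatibilities. Conditions \eqref{C1} and \eqref{C2} are exactly the obstruction to $\psi\circ i_A$ and $\psi\circ i_H$ being coalgebra maps into the tensor coalgebra: from $\psi(a\#1)=u(a_{(1)})\#'p(a_{(2)})$ the identity $\Delta\psi(a\#1)=(\psi\otimes\psi)\Delta(a\#1)$ becomes $u(a_{(1)})\otimes p(a_{(2)})=u(a_{(2)})\otimes p(a_{(1)})$, and symmetrically for $H$. Conditions \eqref{C3}--\eqref{C6} encode multiplicativity of $\psi$ on the three generating products: applying $\psi$ to $(a\#1)(b\#1)=ab\#1$ and projecting by $\pi_A$ yields \eqref{C3}; applying it to $(1\#t)(1\#w)=1\#tw$ and projecting by $\pi_A$ yields \eqref{C4}; and applying it to the mixed relation $(1\#t)(b\#1)=(t_{(1)}\triangleright b)\#t_{(2)}$ and then projecting by $\pi_A$ and by $\pi_H$ yields \eqref{C5} and \eqref{C6} respectively. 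The $\pi_H$-projections of the first two products only reprove that $p$ and $v$ are multiplicative, so nothing new arises there. Throughout, the identities $\varepsilon_A\circ u=\varepsilon_A$, $\varepsilon_H\circ p=\varepsilon_A$ and the counit property of $\triangleright'$ are used to collapse the projected Sweedler sums.

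For the \emph{converse} I would take a quadruple satisfying \eqref{C1}--\eqref{C6}, define $\psi:=\psi_{(u,p,r,v)}$ by the displayed formula, and verify it is a bialgebra map, hence automatically a Hopf algebra map. Unitality is immediate and the coalgebra-map property follows from \eqref{C1}, \eqref{C2} as above. The main obstacle is multiplicativity of the explicit formula, a long Sweedler computation intertwining the two smash multiplications, the left $H$-module algebra axioms for $\triangleright$ and $\triangleright'$, and the matched-pair compatibility of Example~\ref{exempleban}. I would tame it by reducing to the generating products $(a\#1)(b\#1)$, $(1\#t)(1\#w)$ and $(1\#t)(b\#1)$, where multiplicativity collapses to \eqref{C3}, \eqref{C4} and the pair \eqref{C5}--\eqref{C6}, and then propagating to arbitrary elements using associativity of the target product and the module-algebra axioms. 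Finally the two assignments are mutually inverse: the reconstruction formula recovers $\psi$ from its components, while $\pi_A\psi_{(u,p,r,v)}i_A=u$ and its three analogues recover the quadruple, using only the unitarity and counit identities already noted.
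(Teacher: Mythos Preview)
The paper does not prove this theorem at all: it is stated as a special case of \cite[Corollary~3.3]{abm} and used as a black box. Your sketch is in fact the standard proof one would find in that reference (specialised to the case where both right actions are trivial), built on the four structure maps $i_A,i_H,\pi_A,\pi_H$, the tensor-coalgebra splitting $f(c)=\pi_Df(c_{(1)})\otimes\pi_Ef(c_{(2)})$, and a generators-and-relations check of multiplicativity on the three products $(a\#1)(b\#1)$, $(1\#t)(1\#w)$, $(1\#t)(b\#1)$. So there is no meaningful comparison to make: your approach is the expected one, and it is sound.

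One small point of phrasing to fix: \eqref{C1} and \eqref{C2} are not ``obstructions'' to $\psi\circ i_A$ and $\psi\circ i_H$ being coalgebra maps---those composites \emph{are} coalgebra maps automatically, as composites of such. Rather, \eqref{C1} is a \emph{consequence}: once you know $\psi(a\#1)=u(a_{(1)})\#'p(a_{(2)})$, the coalgebra-map identity $\Delta\psi(a\#1)=(\psi\otimes\psi)\Delta(a\#1)$ reads
\[
u(a_{(1)})\#'p(a_{(3)})\otimes u(a_{(2)})\#'p(a_{(4)})=u(a_{(1)})\#'p(a_{(2)})\otimes u(a_{(3)})\#'p(a_{(4)}),
\]
and applying $\varepsilon_A\otimes\mathrm{id}_H\otimes\mathrm{id}_A\otimes\varepsilon_H$ yields $p(a_{(2)})\otimes u(a_{(1)})=p(a_{(1)})\otimes u(a_{(2)})$, i.e.\ \eqref{C1}. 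The same remark applies to \eqref{C2}. In the converse direction these same identities are then exactly what you need to verify that $\psi_{(u,p,r,v)}$ is a coalgebra map, so your use of them there is correct.
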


Finally, for the convenience of the reader we include here a straightforward result which will be intensively used in computing all matched pairs between $T_{m^{2}}(q)$ and $K[C_{n}]$.
\begin{Lemma}[{\cite[Lemma 4.1]{abm}}]\label{primitive}
Let $(A, H, \triangleleft, \triangleright)$ be a matched pair of Hopf algebras, $a, b \in \mathcal{G}(A)$ and $t, w\in \mathcal{G}(H)$. Then
\begin{enumerate}\itemsep=0pt
\item[$(1)$] $t \triangleright a \in \mathcal{G}(A)$ and $t\triangleleft a \in \mathcal{G}(H)$;

\item[$(2)$] if $c \in \mathcal{P}_{a, b}(A)$, then $t \triangleleft c \in \mathcal{P}_{t\triangleleft a, t\triangleleft b} (H)$ and
$t \triangleright c \in \mathcal{P}_{t\triangleright a, t\triangleright b} (A)$;

\item[$(3)$] if $z \in \mathcal{P}_{t, w}(H)$, then $z \triangleleft a \in \mathcal{P}_{t\triangleleft a, w\triangleleft a} (H)$ and $z \triangleright a \in \mathcal{P}_{t\triangleright a, w\triangleright a} (A)$.
\end{enumerate}
In particular, if $c$ is an $(a, 1)$-primitive element of $A$, then $t \triangleright c$ is an $(t\triangleright a, 1)$-primitive element of $A$ and $t \triangleleft c$ is an $(t\triangleleft a, 1)$-primitive element of~$H$.
\end{Lemma}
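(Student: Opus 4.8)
The plan is to derive all three assertions from the single structural fact encoded in the definition of a matched pair: both $\triangleright\colon H\otimes A\to A$ and $\triangleleft\colon H\otimes A\to H$ are morphisms of coalgebras, where $H\otimes A$ carries the tensor-product coalgebra structure. Explicitly, for all $t\in H$ and $a\in A$ this means
\begin{gather*}
\Delta_A(t\triangleright a)=\big(t_{(1)}\triangleright a_{(1)}\big)\otimes\big(t_{(2)}\triangleright a_{(2)}\big),\qquad \varepsilon_A(t\triangleright a)=\varepsilon_H(t)\varepsilon_A(a),
\end{gather*}
together with the two analogous identities for $\triangleleft$. Each claim will then follow by inserting group-like or primitive elements into these formulas and reading off the resulting comultiplication.

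First I would establish~(1). For $a\in\mathcal G(A)$ and $t\in\mathcal G(H)$ we have $\Delta_A(a)=a\otimes a$ and $\Delta_H(t)=t\otimes t$, so the coalgebra-map identity collapses to $\Delta_A(t\triangleright a)=(t\triangleright a)\otimes(t\triangleright a)$, while $\varepsilon_A(t\triangleright a)=\varepsilon_H(t)\varepsilon_A(a)=1$. The value $1\neq 0$ forces $t\triangleright a\neq 0$, whence $t\triangleright a\in\mathcal G(A)$; the assertion for $t\triangleleft a$ is verbatim with $\triangleright$ replaced by $\triangleleft$. This part is logically prior to~(2) and~(3), since it guarantees that the elements acting as endpoints of the primitive elements there are genuinely group-like.

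For~(2) I would take $c\in\mathcal P_{a,b}(A)$, i.e.\ $\Delta_A(c)=c\otimes a+b\otimes c$, and keep $t\in\mathcal G(H)$ so that $t_{(1)}\otimes t_{(2)}=t\otimes t$. Substituting into the coalgebra-map identity for $\triangleright$ yields
\begin{gather*}
\Delta_A(t\triangleright c)=(t\triangleright c)\otimes(t\triangleright a)+(t\triangleright b)\otimes(t\triangleright c),
\end{gather*}
which, as $t\triangleright a,\,t\triangleright b\in\mathcal G(A)$ by~(1), is precisely the statement $t\triangleright c\in\mathcal P_{t\triangleright a,\,t\triangleright b}(A)$; the $\triangleleft$ version is the identical computation carried out in $H$. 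Part~(3) is entirely symmetric: for $z\in\mathcal P_{t,w}(H)$ one has $\Delta_H(z)=z\otimes t+w\otimes z$, and now it is $a\in\mathcal G(A)$ that is held group-like, so feeding $z\otimes a$ into the two coalgebra-map identities produces $\Delta(z\triangleright a)=(z\triangleright a)\otimes(t\triangleright a)+(w\triangleright a)\otimes(z\triangleright a)$ and its $\triangleleft$ analogue.

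Finally, the ``in particular'' statement is the specialization $b=1_A$ of~(2). On the $A$-side, since $t$ is group-like the unit axiom~\eqref{mp1} gives $t\triangleright 1_A=\varepsilon_H(t)1_A=1_A$, so $t\triangleright c\in\mathcal P_{t\triangleright a,\,1}(A)$; the $H$-side is obtained by the same specialization together with the corresponding unit normalization for $\triangleleft$. I do not anticipate a genuine obstacle: the whole lemma is bookkeeping built on top of the coalgebra-map axioms, and the only point demanding care is to prove~(1) before~(2)--(3), so that the primitive-element notation $\mathcal P_{\,\cdot\,,\,\cdot}$ is only ever applied to honest group-like endpoints.
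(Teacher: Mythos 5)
Your treatment of parts $(1)$--$(3)$ is correct, and it is the right argument: everything follows from the fact that $\triangleright$ and $\triangleleft$ are coalgebra maps on the tensor coalgebra $H\otimes A$, evaluated on group-likes and skew-primitives, with part $(1)$ established first so that the endpoints in $\mathcal{P}_{\,\cdot\,,\,\cdot\,}$ are legitimate. (For what it is worth, the paper itself gives no proof of this lemma; it imports it verbatim from \cite{abm}, so your proposal can only be judged on its own merits, and on points $(1)$--$(3)$ it is sound.)

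The gap is in the $\triangleleft$-half of the ``in particular'' clause. You invoke ``the corresponding unit normalization for $\triangleleft$'', but no such normalization exists: the matched-pair axiom~\eqref{mp1} normalizes $1_H\triangleleft a$, not $t\triangleleft 1_A$, and since $(H,\triangleleft)$ is a unital right $A$-module one has $t\triangleleft 1_A=t$, \emph{not} $1_H$. Consequently, specializing $b=1_A$ in part~$(2)$ yields $t\triangleleft c\in\mathcal{P}_{t\triangleleft a,\;t}(H)$, and this cannot be upgraded to $(t\triangleleft a,1)$-primitivity: if $z:=t\triangleleft c$ satisfied both $\Delta_H(z)=z\otimes(t\triangleleft a)+t\otimes z$ and $\Delta_H(z)=z\otimes(t\triangleleft a)+1\otimes z$, subtraction would give $(t-1)\otimes z=0$, forcing $z=0$ whenever $t\neq 1_H$ --- which is no part of the matched-pair axioms. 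In fact the clause as printed in this paper is a misstatement: the source result \cite[Lemma~4.1]{abm} concludes that $t\triangleleft c$ is $(t\triangleleft a,\,t)$-primitive, and that is also how the present paper uses the lemma in the proof of Proposition~\ref{mapex1}, where $g\triangleleft x\in\mathcal{P}_{g^{t},\,g}\big(K[C_{n}]\big)$ (second index $g$, not $1$). Your computation, carried out honestly, proves the corrected statement; the step you wave at to reach the printed one is exactly the step that fails, and the correct response would have been to flag the typo rather than to assert a normalization that the axioms do not provide.
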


\section{Main results}\label{2}

In this section we deal with the description and classification of Hopf algebras which factorize through $T_{m^{2}}(q)$ and $K[C_{n}]$ for any $m, n \in \mathbb{N}$, $n \geqslant 2$, $m \geqslant 3$. The case $m = 2$ was recently considered in~\cite{abm}. Throughout, the field $K$ will be assumed to contain a primitive $m$-th root of unity $q$. We start by describing all possible matched pairs $(T_{m^{2}}(q), K[C_n], \triangleleft,
\triangleright)$: exactly as in~\cite{abm}, it turns out that the aforementioned matched pairs are in bijection with the cyclic group~$U_n (K)$ of
$n$-th roots of unity in~$K$. More precisely, the right action $\triangleleft \colon K[C_{n}] \otimes T_{m^{2}}(q) \to K[C_{n}]$ is trivial while
the left action $\triangleright \colon K[C_{n}] \otimes T_{m^{2}}(q) \to T_{m^{2}}(q)$ is implemented by an $n$-th root of unity as described below.

\begin{Proposition} \label{mapex1}
There exists a bijective correspondence between the set of all matched pairs $(T_{m^{2}}(q), K[C_n], \triangleleft, \triangleright)$ and $U_n (K)$. More precisely, the matched pair $(\triangleleft, \triangleright)$ corresponding to an $n$-th root of unity $\omega \in U_n (K)$ is given by
\begin{gather}\label{mp}
g^{i} \triangleright h^{j} x^{k} = \omega^{ik} h^{j} x^{k}, \qquad g^{i} \triangleleft h^{j} x^{k} = g^{i} \varepsilon\big(x^{k}\big)
\end{gather}
for all $i = 0,\dots, n-1$ and $j, k = 0,\dots, m-1$.
\end{Proposition}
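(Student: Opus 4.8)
The plan is to determine both actions from their values on the algebra generators $h,x$ of $T_{m^2}(q)$ and the group-like generator $g$ of $K[C_n]$, using Lemma~\ref{primitive} for the coalgebra constraints and the matched pair axioms to pin down the free parameters. Since $g^i$ and $h$ are group-like, part~(1) of Lemma~\ref{primitive} gives $g^i\triangleright h=h^{p_i}\in\mathcal G(T_{m^2}(q))$ and $g^i\triangleleft h=g^{\sigma(i)}\in\mathcal G(K[C_n])$; since $x$ is $(h,1)$-primitive, part~(2) gives $g^i\triangleright x\in\mathcal P_{h^{p_i},1}(T_{m^2}(q))$ and $g^i\triangleleft x\in\mathcal P_{g^{\sigma(i)},g^i}(K[C_n])$. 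The latter space is spanned by $g^{\sigma(i)}-g^i$, so $g^i\triangleleft x=\mu_i\big(g^{\sigma(i)}-g^i\big)$, while the former is described explicitly in the preliminaries. The claim to be reached is that the only admissible data are $\sigma(i)=i$, $\mu_i=0$, $p_i=1$ and $g^i\triangleright x=\omega^i x$ for a single $\omega\in U_n(K)$.

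The key step is to show that the right action is trivial, and the main idea is that $g^i\triangleright(-)$ preserves the coradical. Indeed $g^i\triangleright(-)$ is a coalgebra map (as $g^i$ is group-like) and a bijection with inverse $g^{n-i}\triangleright(-)$ (since $g^n=1$ acts as the identity); being an injective coalgebra endomorphism it sends group-likes to group-likes, hence permutes the finite set $\mathcal G(T_{m^2}(q))=\{1,h,\dots,h^{m-1}\}$ and stabilises its span $W=K[\langle h\rangle]$. Consequently $g^i\triangleright x\notin W$ for every $i$: otherwise $x=g^{n-i}\triangleright\big(g^i\triangleright x\big)\in g^{n-i}\triangleright W=W$, which is absurd. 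Now I would feed $y=g^i$, $a=x$ into the cocommutativity compatibility~\eqref{mp4}; using $\Delta(x)=x\otimes h+1\otimes x$ it collapses to
\begin{gather*}
\big(g^{\sigma(i)}-g^i\big)\otimes\big(\mu_i\big(h^{p_i}-1\big)-g^i\triangleright x\big)=0 .
\end{gather*}
Since $\mu_i\big(h^{p_i}-1\big)\in W$ but $g^i\triangleright x\notin W$, the second tensor factor is nonzero, forcing $g^{\sigma(i)}=g^i$, i.e.\ $\sigma(i)=i$ for all $i$. Then $g^i\triangleleft x\in\mathcal P_{g^i,g^i}(K[C_n])=0$, and propagating $g^i\triangleleft h=g^i$, $g^i\triangleleft x=0$ through the module axioms yields $g^i\triangleleft\big(h^jx^k\big)=\varepsilon\big(x^k\big)g^i$; thus $\triangleleft$ is trivial.

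With $\triangleleft$ trivial, Example~\ref{exempleban} applies: since $K[C_n]$ is cocommutative the remaining compatibility there holds automatically, so the matched pair structure is equivalent to $\big(T_{m^2}(q),\triangleright\big)$ being a left $K[C_n]$-module algebra (it is already a module coalgebra). Hence $g\triangleright(-)$ is a bijective bialgebra, and therefore Hopf algebra, endomorphism of $T_{m^2}(q)$. A short computation classifies these automorphisms: $x^m=0$ forces $g\triangleright h=h$ (a nontrivial group-like image $h^a$ would make $g\triangleright x$ a multiple of $h^a-1$, whose $m$-th power is nonzero, contradicting injectivity), and then $g\triangleright x=\beta(h-1)+\gamma x$ must respect $xh=qhx$, which kills $\beta$ because $q\neq 1$. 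Writing $\omega:=\gamma$, the relation $g^n\triangleright x=x$ gives $\omega^n=1$, so $\omega\in U_n(K)$, and multiplicativity propagates this to $g^i\triangleright h^jx^k=\omega^{ik}h^jx^k$, which is exactly~\eqref{mp}. Conversely, for each $\omega\in U_n(K)$ the assignment $h\mapsto h$, $x\mapsto\omega x$ extends to a Hopf algebra automorphism (it preserves $h^m=1$, $x^m=0$, $xh=qhx$ and the coproduct), hence by Example~\ref{exempleban} defines a matched pair; distinct $\omega$ give distinct actions, as $\omega$ is recovered from $g\triangleright x=\omega x$. This establishes the bijection with $U_n(K)$.

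The main obstacle is precisely the triviality of $\triangleleft$: a priori $g\triangleright(-)$ is only a coalgebra and not an algebra map, so before the module-algebra structure is available one cannot freely compute on generators, and the entanglement of the two actions in~\eqref{mp2} must be circumvented. This is exactly what the coradical-stability argument achieves, after which the classification of the Taft-algebra automorphisms and the passage to $U_n(K)$ are routine.
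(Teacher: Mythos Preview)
Your proof is correct and reaches the same conclusion, but the route differs from the paper's in a meaningful way. The paper first pins down $g\triangleright h=h$ by a direct case analysis (ruling out $g\triangleright h=1$ and $g\triangleright h=h^t$, $t\neq 1$, via the induction $g^n\triangleright(-)=\mathrm{id}$), then writes $g\triangleright x=\alpha(1-h)+\beta x$, applies~\eqref{mp4} for the single pair $(g,x)$ to force $g\triangleleft h=g$, and finally uses~\eqref{mp2} on $g\triangleright(xh)=q\,g\triangleright(hx)$ to kill $\alpha$; the remaining formulas are obtained by explicit inductions with~\eqref{mp2} and~\eqref{mp3}. You instead observe upfront that each $g^i\triangleright(-)$ is a coalgebra \emph{automorphism} (with inverse $g^{n-i}\triangleright(-)$), hence stabilises the coradical $W=K[\langle h\rangle]$, so $g^i\triangleright x\notin W$ for all $i$; plugging this into~\eqref{mp4} yields the clean factorisation $(g^{\sigma(i)}-g^i)\otimes\big(\mu_i(h^{p_i}-1)-g^i\triangleright x\big)=0$ and immediately forces $\sigma(i)=i$, hence $\triangleleft$ trivial. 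You then invoke Example~\ref{exempleban} (with $K[C_n]$ cocommutative) to upgrade $g\triangleright(-)$ to a Hopf algebra automorphism, and finish by classifying $\mathrm{Aut}_{\mathrm{Hopf}}(T_{m^2}(q))$ of order dividing $n$. Your argument is more conceptual and transportable---the coradical-stability step and the reduction to automorphisms would work verbatim for other pointed $A$ acted on by a group algebra---while the paper's approach is more elementary and self-contained, needing neither Example~\ref{exempleban} nor the separate automorphism classification. One cosmetic point: your phrase ``whose $m$-th power is nonzero, contradicting injectivity'' is slightly compressed; the logic is that $(h^a-1)^m\neq 0$ in $K[\langle h\rangle]$ (no nilpotents, since $\mathrm{char}\,K\nmid m$) forces $\alpha=0$ via $0=g\triangleright x^m=\alpha^m(h^a-1)^m$, and \emph{then} $g\triangleright x=0$ contradicts injectivity (and the case $a=0$ gives $g\triangleright x\in\mathcal P_{1,1}=0$ directly).
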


\begin{proof} Let $(T_{m^{2}}(q), K[C_n], \triangleleft, \triangleright)$ be a~matched pair. We start by proving that $g \triangleright h = h$.
Indeed, by Lemma~\ref{primitive} we have $g \triangleright h \in \mathcal{G}\big(T_{m^{2}}(q)\big) = \{1, h,\dots,h^{m-1}\}$. Suppose first that $g \triangleright h = 1$. Then, by induction we have $1 = g^{n} \triangleright h = h$ which is obviously a contradiction. Assume now that $g \triangleright h = h^{t}$, where $t \in \{2, 3,\dots, m-1\}$. Now Lemma~\ref{primitive} implies $g \triangleright x \in \mathcal{P}_{h^{t}, 1}\big(T_{m^{2}}(q)\big)$. Then, as $t \neq 1$ we obtain $g \triangleright x = \alpha(1-h^{t})$ for some $\alpha \in K$. Hence, $g^{2} \triangleright x = g \triangleright (g \triangleright x) = \alpha - \alpha g \triangleright h^{t}$. Again by induction we arrive at $x = g^{n} \triangleright x = \alpha - \alpha g^{n-1} \triangleright h^{t}$ which leads to another contradiction since Lemma~\ref{primitive} yields $g^{n-1} \triangleright h^{t} \in \mathcal{G}\big(T_{m^{2}}(q)\big) = \{1, h,\dots,h^{m-1}\}$. Hence $g \triangleright h = h$ as desired. Furthermore, this
also implies that $g \triangleright x = \alpha(1-h)+ \beta x$ for some~$\alpha, \beta \in K$. Using induction again we obtain
\begin{gather*}
g^{n} \triangleright x = \alpha\big(1 + \beta + \cdots + \beta^{n-1}\big) (1-h) + \beta^{n} x.
\end{gather*}
As we also have $g^{n} \triangleright x = x$ it follows that
\begin{gather*}
\beta^{n} = 1, \qquad \alpha\big(1 + \beta + \cdots + \beta^{n-1}\big)=0.
\end{gather*}
Next we look at the right action $\triangleleft$. Again by Lemma~\ref{primitive} we have $g \triangleleft h \in \mathcal{G}\big(K[C_n]\big) = \{1, g,\dots, g^{n-1}\}$, therefore $g \triangleleft h = g^{t}$ for some $t \in \{0,1, \dots, n-1\}$. If $g \triangleleft h = 1$ we obtain by induction
$1 = g \triangleleft h^{n} = g \triangleleft 1 = g$ which is a~contradiction. Thus $g \triangleleft h = g^{t}$ for some $t \in \{1, 2,\dots, n-1\}$. Moreover, we also have $g \triangleleft x \in \mathcal{P}_{g^{t}, g} \big(K[C_{n}]\big)$, i.e., $g \triangleleft x = \mu(g - g^{t})$ for some $\mu \in K$. Applying the compatibility condition~\eqref{mp4} for the pair $(g, x)$ yields
\begin{gather*}
\mu g \otimes h - \mu g^{t} \otimes h + \alpha g \otimes 1 - \alpha g \otimes h + \beta g \otimes x \\
\qquad{} = \alpha g^{t} \otimes 1 - \alpha g^{t} \otimes h + \beta g^{t} \otimes x + \mu g \otimes 1 - \mu g^{t} \otimes 1.
\end{gather*}
Now if $t \neq 1$ then we must have $\beta = 0$ which again contradicts $\beta^{n} = 1$. Hence $t = 1$ which implies $g \triangleleft h = g$ and respectively $g \triangleleft x = 0$. Moreover, the compatibility condition~\eqref{mp4} is now trivially fulfilled.

Next we make use of the compatibility condition~\eqref{mp2}. More precisely we have
\begin{gather*}
g \triangleright hx \stackrel{\eqref{mp2}} {=} (g \triangleright h) \big((g \triangleleft h) \triangleright x\big)
= h (g \triangleright x) = \alpha h - \alpha h^{2} + \beta hx,\\
g \triangleright xh \stackrel{\eqref{mp2}} {=} \big(g \triangleright x_{(1)}\big) \big(\big(g \triangleleft x_{(2)}\big) \triangleright h\big)
= (g \triangleright x) \big((g \triangleleft h) \triangleright h\big) = \alpha h - \alpha h^{2} + \beta xh.
\end{gather*}
As $xh = q hx$, putting all the above together gives
\begin{gather*}
q\alpha h - q\alpha h^{2} + q\beta hx = \alpha h - \alpha h^{2} + \beta xh,
\end{gather*}
and since $q \neq 1$ we obtain $\alpha = 0$. Thus $g \triangleright x = \beta x$. To summarize, for all $i \in \{0, 1,\dots, n-1\}$ and $j \in \{0, 1,\dots, m-1\}$ we have
\begin{gather*}
g \triangleleft h^{j} = g, \qquad g \triangleleft x^{j} = g \varepsilon\big(x^{j}\big), \qquad g^{i} \triangleright h = h, \qquad g^{i} \triangleright x = \beta^{i} x,\qquad {\rm where} \quad \beta \in U_{n}(K).
\end{gather*}
Now using induction and the compatibility conditions~\eqref{mp2} and respectively~\eqref{mp3} we obtain the formulae in~\eqref{mp}. To start with, using~\eqref{mp3} we obtain $g^{i} \triangleleft h = g^{i}$ for all $i \in \{0, 1,\dots, n-1\}$. Indeed, we have
\begin{gather*}
g^{2} \triangleleft h \stackrel{\eqref{mp3}} = \big(g \triangleleft (g \triangleright h)\big) (g \triangleleft h)= g^{2},\\
g^{3} \triangleleft h \stackrel{\eqref{mp3}} = \big(g^{2} \triangleleft (g \triangleright h)\big) (g \triangleleft h)= \big(g^{2} \triangleleft h\big)(g \triangleleft h) = g^{3},
\end{gather*}
and by induction we arrive at the desired conclusion. Furthermore, as $\triangleleft$ is a right action of $T_{m^{2}}(q)$ on $K[C_{n}]$, we obtain
\begin{gather}\label{001}
g^{i} \triangleleft h^{j} = g^{i},\qquad \text{for all} \quad i \in \{0, 1,\dots, n-1\} \quad {\rm and}\quad j \in \{0, 1,\dots, m-1\}.
\end{gather}
Now induction together with~\eqref{mp2} and~\eqref{001} yields
\begin{gather}\label{002}
g^{i} \triangleright h^{j} = h^{j}, \qquad \text{for all} \quad i \in \{0, 1,\dots, n-1\} \quad {\rm and} \quad j \in \{0, 1,\dots, m-1\}.
\end{gather}
Next we use~\eqref{mp3} in order to prove that $g^{i} \triangleleft x =0$. Indeed, we have
\begin{gather*}
g^{2} \triangleleft x \stackrel{\eqref{mp3}} {=} \big(g \triangleleft \big(g \triangleright x_{(1)}\big)\big) \big(g \triangleleft
x_{(2)}\big) = \big(g \triangleleft (g \triangleright x)\big) (g \triangleleft h) + \big(g \triangleleft (g \triangleright 1)\big) (g \triangleleft x)=0, \\
g^{3} \triangleleft x \stackrel{\eqref{mp3}} {=} \big(g^{2} \triangleleft \big(g \triangleright x_{(1)}\big)\big) \big(g \triangleleft
x_{(2)}\big)= \big(g^{2} \triangleleft (g \triangleright x)\big) (g \triangleleft h) + \big(g^{2} \triangleleft (g \triangleright 1)\big) (g \triangleleft x)=0,
\end{gather*}
and the conclusion follows again by induction. Moreover, we actually have
\begin{gather*}
g^{i} \triangleleft x^{j} = g^{i} \varepsilon\big(x^{j}\big), \qquad \text{for all} \quad i \in \{0, 1,\dots, n-1\} \quad {\rm and} \quad j \in \{0, 1,\dots, m-1\}.
\end{gather*}
Finally, we will use~\eqref{mp3} in order to prove that
\begin{gather}\label{003}
g^{i} \triangleright x^{j} = \beta^{ij}x^{j}, \qquad \text{for all} \quad i \in \{0, 1,\dots, n-1\} \quad {\rm and} \quad j \in \{0, 1,\dots, m-1\}.
\end{gather}
To this end, we have
\begin{gather*}
g^{i} \triangleright x^{2} \stackrel{\eqref{mp2}} {=} \big(g^{i} \triangleright x_{(1)}\big)\big( \big(g^{i} \triangleleft
x_{(2)}\big)\triangleright x\big) \\
\hphantom{g^{i} \triangleright x^{2}}{} \ = \big(g^{i} \triangleright x\big)\big( \big(g^{i} \triangleleft h\big)\triangleright x\big) + \big(g^{i} \triangleright 1\big)\big( \big(g^{i} \triangleleft x\big)\triangleright x\big) = \beta^{2i}x^{2},\\
g^{i} \triangleright x^{3} \stackrel{\eqref{mp2}} {=} \big(g^{i} \triangleright x_{(1)}\big)\big( \big(g^{i} \triangleleft
x_{(2)}\big)\triangleright x^{2}\big)\\
 \hphantom{g^{i} \triangleright x^{3}}{} \ = \big(g^{i} \triangleright x\big)\big( \big(g^{i} \triangleleft h\big)\triangleright x^{2}\big) + \big(g^{i} \triangleright 1\big)\big( \big(g^{i} \triangleleft x\big)\triangleright x^{2}\big) =\beta^{3i}x^{3},
\end{gather*}
and as before the conclusion follows by induction. Putting all the above together we obtain
\begin{gather*}
g^{i} \triangleright h^{j} x^{k} \stackrel{\eqref{mp2}} {=} \big(g^{i} \triangleright h^{j}\big)\big( \big(g^{i} \triangleleft h^{j}\big)\triangleright x^{k}\big) \stackrel{\eqref{001}, \, \eqref{002}} {=} h^{j} \big(g^{i} \triangleright x^{k}\big)\stackrel{\eqref{003}} {=} \beta^{ik} h^{j} x^{k}.
\end{gather*}
Similarly one can prove the second part of \eqref{mp} and the proof is now finished.
\end{proof}

Next we show that a Hopf algebra $E$ factorizes through $T_{m^{2}}(q)$ and $K[C_n]$ if and only if $E \cong T_{nm^{2}}^{\omega}(q)$, where $T_{nm^{2}}^ {\omega}(q)$ is a quantum group associated to an $n$-th root of unity $\omega$ as depicted below.

\begin{Corollary}\label{primaclasi}
A Hopf algebra $E$ factorizes through $T_{m^{2}}(q)$ and $K[C_n]$ if and only if $E \cong T_{nm^{2}}^ {\omega}(q)$, for some $\omega \in U_n (K)$, where we denote by $T_{nm^{2}}^{\omega}(q)$ the Hopf algebra generated by $g$, $h$ and $x$ subject to the relations
\begin{gather*}
g^{n} = h^{m} = 1,\qquad x^{m} = 0,\qquad x h = q h x, \qquad h g = g h,\qquad g x = \omega x g
\end{gather*}
with the coalgebra structure and antipode given by
\begin{gather*}
\Delta(g) = g \otimes g, \qquad \Delta(h) = h \otimes h, \qquad \Delta(x) = x \otimes h + 1 \otimes x,\\
\varepsilon(h) = \varepsilon(g) = 1, \qquad \varepsilon(x) = 0, \qquad S(h) = h^{m-1}, \qquad S(x) = -xh^{m-1}, \qquad S(g) = g^{n-1}.
\end{gather*}
\end{Corollary}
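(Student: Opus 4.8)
The plan is to combine Majid's characterization with the explicit classification of matched pairs already obtained, and then to recognize the resulting bicrossed product by generators and relations. By Theorem~\ref{carMaj}, a Hopf algebra $E$ factorizes through $T_{m^{2}}(q)$ and $K[C_n]$ if and only if $E \cong T_{m^{2}}(q) \bowtie K[C_n]$ for some matched pair $(\triangleleft, \triangleright)$. Proposition~\ref{mapex1} shows that every such matched pair is the one attached to a unique $\omega \in U_n(K)$, and that the right action $\triangleleft$ is trivial; hence, by Example~\ref{exempleban}, the bicrossed product is in fact the smash product $T_{m^{2}}(q) \# K[C_n]$, with multiplication $(a \# y)(b \# z) = a\big(y_{(1)} \triangleright b\big) \# y_{(2)} z$. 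It therefore suffices to prove that for each $\omega$ this smash product is isomorphic, as a Hopf algebra, to the abstractly presented $T_{nm^{2}}^{\omega}(q)$.

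First I would fix the candidate embeddings $h \mapsto h \# 1$, $x \mapsto x \# 1$, $g \mapsto 1 \# g$ and read off the relations of $E := T_{m^{2}}(q) \# K[C_n]$ directly from the smash multiplication. The relations internal to each factor, namely $h^{m} = 1$, $x^{m} = 0$, $xh = qhx$ and $g^{n} = 1$, are inherited from the Hopf subalgebras $T_{m^{2}}(q) \# 1$ and $1 \# K[C_n]$. The two cross relations follow from a one-line computation using that $g$ is grouplike: since $g \triangleright h = h$ one gets $(1 \# g)(h \# 1) = h \# g = (h \# 1)(1 \# g)$, i.e.\ $hg = gh$; and since $g \triangleright x = \omega x$ one gets $(1 \# g)(x \# 1) = \omega x \# g = \omega (x \# 1)(1 \# g)$, i.e.\ $gx = \omega x g$. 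Thus all defining relations of $T_{nm^{2}}^{\omega}(q)$ hold in $E$, so there is a Hopf algebra map $\phi \colon T_{nm^{2}}^{\omega}(q) \to E$ sending generators to generators; the prescribed coalgebra structure and antipode carry over by construction, since $h \# 1$ and $1 \# g$ are grouplike in the tensor coalgebra $T_{m^{2}}(q) \otimes K[C_n]$ while $x \# 1$ is $(h\#1, 1)$-primitive, and the bicrossed product antipode evaluated on generators returns exactly $h^{m-1}$, $g^{n-1}$ and $-x h^{m-1}$.

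It remains to show $\phi$ is bijective, for which a dimension count is the clean route. On the one hand $E$ has the $K$-basis $\big\{h^{j} x^{k} \# g^{i}\big\}_{0 \leq i \leq n-1,\, 0 \leq j,k \leq m-1}$, so $\dim_K E = nm^{2}$. On the other hand, the relations presenting $T_{nm^{2}}^{\omega}(q)$ allow any monomial in $g, h, x$ to be rewritten in the normal form $g^{i} h^{j} x^{k}$ within these same ranges: $g^{n} = h^{m} = 1$ and $x^{m} = 0$ bound the exponents, while $hg = gh$, $xh = qhx$ and $gx = \omega x g$ let one reorder the generators. Hence $\dim_K T_{nm^{2}}^{\omega}(q) \leq nm^{2}$. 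Since $\phi$ is surjective, its image containing the algebra generators $h \# 1$, $x \# 1$, $1 \# g$ of $E$, the inequality forces $\phi$ to be an isomorphism. Conversely, by the first paragraph any $E$ factorizing through the two Hopf algebras is isomorphic to $T_{nm^{2}}^{\omega}(q)$ for some $\omega \in U_n(K)$, which completes the equivalence.

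The only genuinely delicate point is the dimension count: one must ensure the presented relations do not collapse $T_{nm^{2}}^{\omega}(q)$ below $nm^{2}$. This is precisely what the surjection $\phi$ onto the $nm^{2}$-dimensional algebra $E$ secures --- the normal-form rewriting gives $\dim_K T_{nm^{2}}^{\omega}(q) \leq nm^{2}$ and surjectivity of $\phi$ gives $\geq nm^{2}$ --- so no independent diamond-lemma verification of the linear independence of the spanning set $\big\{g^{i} h^{j} x^{k}\big\}$ is required.
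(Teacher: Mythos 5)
Your proposal is correct and follows essentially the same route as the paper: reduce via Theorem~\ref{carMaj} and Proposition~\ref{mapex1} to the smash product $T_{m^{2}}(q)\ \#\ K[C_n]$, identify the generators $h\#1$, $x\#1$, $1\#g$, and verify the cross relations $hg=gh$ and $gx=\omega xg$ from the smash multiplication. The only difference is that you make explicit the dimension count (normal form $g^{i}h^{j}x^{k}$ giving $\dim T_{nm^{2}}^{\omega}(q)\leq nm^{2}$, matched by surjectivity onto the $nm^{2}$-dimensional smash product), a point the paper leaves implicit in its ``up to canonical identification,'' so your write-up is if anything more complete.
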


\begin{proof} By Theorem~\ref{carMaj} any Hopf algebra~$E$ which factorizes through $T_{m^{2}}(q)$ and $K[C_n]$ is isomorphic to a bicrossed product
between the aforementioned Hopf algebras. Furthermore, all bicrossed products between $T_{m^{2}}(q)$ and $K[C_n]$ are described in Proposition~\ref{mapex1}: the left action is completely determined by an $n$-th root of unity $\omega$ as in~\eqref{mp} while the right action is trivial. Therefore, $E$ is in fact isomorphic to a smash product $T_{m^{2}}(q) \# K[C_n]$ associated to the left action defined in~\eqref{mp}. Now, up to canonical identification, $T_{m^{2}}(q) \# K[C_n]$ is generated as an algebra by $h = h \# 1$, $x = x \# 1$ and $g = 1\# g$. Hence, we have
\begin{gather*}
gh = (1\# g) (h \# 1)= g \triangleright h \# g \triangleleft h = h \# g = (h \# 1)(1 \# g) = hg,\\
g x = (1 \# g) (x \# 1) = g \triangleright x_{(1)}
\# g \triangleleft x_{(2)} = g \triangleright x \# g \triangleleft h + g \triangleright 1 \# g \triangleleft x =\omega x \# g = \omega x g. \tag*{\qed}
\end{gather*}\renewcommand{\qed}{}
\end{proof}
\begin{Remark}Let $\omega \in U_{n}(K)$. It can be easily seen that a $K$-basis in $T_{nm^{2}}^ {\omega}$ is given by $\{ g^{i}, h^{j} g^{i},
 x^{k} g^{i}, h^{j} x^{k} g^{i} \,| \,i = 0, 1,\dots, n-1, \,{\rm and}\, j, k = 1, 2,\dots, m-1 \}$. Obviously $T_{nm^{2}}^ {\omega}$ is a Hopf algebra of dimension $nm^{2}$ over~$K$.
\end{Remark}

Our next result provides the classification of Hopf algebras which factorize through the Taft algebra and the group Hopf algebra $k[C_{n}]$. To start with, if the Hopf algebras $T_{nm^{2}}^{\omega}$ and $T_{kl^{2}}^ {\omega '}$ are isomorphic it is straightforward to see, by comparing the order of their group of group-like elements and dimensions, that $n=k$ and $m=l$. Therefore, we will focus on finding necessary and sufficient conditions for two Hopf algebras $T_{nm^{2}}^ {\omega}$ and $T_{nm^{2}}^ {\omega '}$ to be isomorphic.

\begin{Theorem}\label{izo} Let $\xi$ be a generator of $U_n (K)$ and $t, t' \in \{ 0, 1,\dots, \nu (n) -1 \}$. Then the Hopf algebras $T_{nm^{2}}^ {\xi^{t}}$ and $T_{nm^{2}}^ {\xi^{t'}}$ are isomorphic if and only if there exist $l \in \{ 0, 1,\dots, m-1 \}$, $s \in \{ 0, 1,\dots, n-1 \}$ such that $(s, n) = 1$ and $\xi^{st' - t} = q^{l}$.
\end{Theorem}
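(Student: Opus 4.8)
The plan is to prove both implications by directly analysing where a Hopf algebra isomorphism $\phi\colon T_{nm^2}^{\xi^t}\to T_{nm^2}^{\xi^{t'}}$ can send the three generators $g$, $h$, $x$; alternatively one could feed the presentation from Corollary~\ref{primaclasi} into the quadruple description of Theorem~\ref{toatemorf}, but the generator-by-generator route is more transparent. Throughout write $\omega=\xi^t$ and $\omega'=\xi^{t'}$ for the two structure constants, and recall from the Remark that $\{h^j g^i\}$ spans the group $\mathcal{G}=\langle h\rangle\times\langle g\rangle\cong C_m\times C_n$ of group-likes, while $x$ is the distinguished skew-primitive.

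For the forward direction I would first pin down $\phi(h)$. Since $\phi$ preserves the coradical filtration, it maps nontrivial skew-primitives to nontrivial skew-primitives. A short computation in the basis (any $(a,1)$-primitive is a combination of group-likes and of the ``bridges'' $h^j x g^i$, which link $h^{j+1}g^i$ to $h^j g^i$) shows that the only group-like $a$ admitting a nontrivial $(a,1)$-primitive is $a=h$, the relevant element being $x$. As $\phi(x)$ must be a nontrivial $(\phi(h),1)$-primitive, this forces $\phi(h)=h$ and $\phi(x)=\gamma x+\beta(h-1)$ with $\gamma\neq0$. Evaluating the relation $xh=qhx$ on $\phi(x)$ and using $q\neq1$ together with $m\geq3$ then forces $\beta=0$, so $\phi(x)=\gamma x$. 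Next, $\phi(g)$ is a group-like, say $\phi(g)=h^l g^s$; since $\phi$ restricts to an automorphism of $\mathcal{G}\cong C_m\times C_n$ fixing $h$, the images $h$ and $h^l g^s$ must generate $\mathcal{G}$, which happens exactly when $\langle g^s\rangle=\langle g\rangle$, i.e.\ $\gcd(s,n)=1$. Finally I would apply $\phi$ to $gx=\omega xg$: using $g^s x=(\omega')^s x g^s$ and $h^l x=q^{-l}xh^l$ in the target, the relation collapses to $(\omega')^s q^{-l}=\omega$, i.e.\ $\xi^{st'-t}=q^l$, which is the asserted condition (and $l$ may be reduced modulo $m$).

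For the converse, given $l\in\{0,\dots,m-1\}$ and $s$ with $\gcd(s,n)=1$ and $\xi^{st'-t}=q^l$, I would define $\phi$ on generators by $\phi(h)=h$, $\phi(x)=x$, $\phi(g)=h^l g^s$ and check that it respects every defining relation of $T_{nm^2}^{\xi^t}$. The only delicate points are $\phi(g)^n=h^{ln}=1$, which holds because $q^l=\xi^{st'-t}\in U_n(K)$ forces $q^{ln}=1$ and hence $m\mid ln$, and $\phi(g)\phi(x)=\omega\phi(x)\phi(g)$, which reduces precisely to the hypothesis $\xi^{st'-t}=q^l$. Since $\phi$ sends group-likes to group-likes and the skew-primitive $x$ to $x$, it is a bialgebra and hence a Hopf algebra map; it is surjective because $\gcd(s,n)=1$ allows one to recover $g$ from $h^l g^s$ and $h$, and being a surjection between spaces of equal dimension $nm^2$ it is an isomorphism.

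The main obstacle is the first step of the forward direction: establishing that $h$ is the unique group-like carrying a nontrivial skew-primitive, so that $\phi(h)=h$ is forced. Once the images of the generators are constrained, the remaining content is the careful $q$-commutation bookkeeping when evaluating $\phi$ on $gx=\omega xg$, together with the observation that the exponent condition $\xi^{st'-t}=q^l$ automatically guarantees $m\mid ln$ and hence the compatibility of $\phi$ with the relation $g^n=1$.
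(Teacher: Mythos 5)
Your proof is correct, and it takes a genuinely different route from the paper's. The paper stays inside the bicrossed-product formalism: it invokes Theorem~\ref{toatemorf} to parameterize all Hopf algebra maps $T_{nm^{2}}^{\xi^{t}}\to T_{nm^{2}}^{\xi^{t'}}$ by quadruples $(u,p,r,v)$, works through the compatibilities~\eqref{C1}--\eqref{C6} to force $u\big(h^{i}x^{j}\big)=\gamma^{j}h^{i}x^{j}$, $p=\varepsilon$, $r(g)=h^{l}$, $v(g)=g^{s}$, reads off $\xi^{st'-t}=q^{l}$ from~\eqref{C5}, obtains $(s,n)=1$ by showing $v$ must be bijective, and proves the converse by constructing an explicit inverse $\psi_{(u_{\gamma^{-1}},\varepsilon,r_{m-\tau_{2}},v_{\tau_{1}})}$ via a Bezout identity $s\tau+n\mu=1$. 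You instead argue generator by generator on the presentation of Corollary~\ref{primaclasi}: your key structural input --- that nontrivial $(a,1)$-primitives exist only for $a=h$ --- is indeed a short and correct computation (as a coalgebra $T_{nm^{2}}^{\omega}$ splits as a direct sum of $n$ copies of $T_{m^{2}}(q)$ indexed by the group-likes $g^{i}$, so the skew-primitive analysis reduces to the Taft algebra), and it pins down $\phi(h)=h$ and $\phi(x)=\gamma x$; your observation that $\phi$ restricts to an automorphism of $\mathcal{G}\cong C_{m}\times C_{n}$ fixing $h$ gives $(s,n)=1$ more cleanly than the paper's $v\circ\overline{v}={\rm Id}$ argument; and applying $\phi$ to $gx=\omega xg$ yields the exponent condition exactly as you say. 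Your converse (check the defining relations, then surjectivity plus equality of finite dimensions) is genuinely simpler than the paper's explicit inverse. What the paper's heavier machinery buys is reusability: the quadruple description of \emph{all} morphisms, not just isomorphisms, is quoted verbatim in the proof of Theorem~\ref{graut} to compute the automorphism group, and the method applies to bicrossed products for which no convenient presentation is available; your route buys brevity and transparency, and your intermediate conclusion (every isomorphism has the form $h\mapsto h$, $x\mapsto\gamma x$, $g\mapsto h^{l}g^{s}$) would serve equally well as input to Theorem~\ref{graut}. Two cosmetic points: killing the coefficient $\beta$ in $\phi(x)=\gamma x+\beta(h-1)$ needs only $h^{2}\neq h$, hence any $m\geq 2$ rather than $m\geq 3$; and you should state explicitly that injectivity of $\phi$ is what prevents the nontrivial primitive $x$ from landing on a scalar multiple of $\phi(h)-1$.
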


\begin{proof} We know from Corollary~\ref{primaclasi} that $T_{nm^{2}}^ {\xi^{t}}$ (respectively $T_{nm^{2}}^ {\xi^{t'}}$) is the smash product corresponding to the root of unity $\xi^{t}$ (respectively $\xi^{t'}$) as in Proposition~\ref{mapex1}. By Theorem~\ref{toatemorf} the set of all Hopf algebra morphisms between $T_{nm^{2}}^ {\xi^{t}}$ and $T_{nm^{2}}^ {\xi^{t'}}$ is parameterized by the quadruples $(u, p, r, v)$, where $u\colon T_{m^{2}}(q) \to T_{m^{2}}(q)$, $r\colon K[C_{n}] \to T_{m^{2}}(q)$ are unitary coalgebra maps and $p\colon T_{m^{2}}(q) \to K[C_{n}]$, $v\colon K[C_{n}] \to K[C_{n}]$ are Hopf algebra maps satisfying the compatibility conditions~\eqref{C1}--\eqref{C6}. Our aim is to describe completely the above quadruples for which the corresponding morphism of Hopf algebras $\psi_{(u,p,r,v)}$ is bijective. We start by describing the two Hopf algebra maps $p$ and $v$. Obviously, any Hopf algebra map $v\colon K[C_{n}] \to K[C_{n}]$ is completely determined by an integer $s \in \{ 0, 1, \dots, n-1 \}$ such that $v(g) = g^{s}$. Similarly, using Lemma~\ref{primitive} we get $p(h) = g^{c}$ for some $c \in \{ 0, 1, \dots, n-1 \}$ such that $n \,|\, mc$. Furthermore, $p(x) = \lambda(g^{c} - 1)$ for some scalar $\lambda \in K$. Now imposing the compatibility condition $p(xh) = q p(hx)$ to hold true we obtain $\lambda g^{c}(g^{c} - 1) (1-q) = 0$ and since $q \neq 1$ we are lead to $\lambda(g^{c} - 1) = 0$, that is $p(x) = 0$. Now we turn to the two coalgebra maps $u\colon T_{m^{2}}(q) \to T_{m^{2}}(q)$, $r\colon K[C_{n}] \to T_{m^{2}}(q)$. We obviously have $r(1) = 1$, $r(g) = h^{l}$ for some $l \in \{ 0, \dots, m-1 \}$. The compatibility condition~\eqref{C4} gives
\begin{gather*}
r\big(g^{2}\big) = r(g)\big(v(g) \triangleright ' r(g)\big) = h^{l} \big(g^{s} \triangleright ' h^{l}\big) = h^{2l}.
\end{gather*}
Using induction and~\eqref{C4} we get $r(g^{i}) = h^{il}$ for any $i \in \mathbb{N}$. In particular, we have $1 = r(g^{n}) = h^{nl}$ and therefore $m \,|\, nl$. Finally we are left to describe the coalgebra maps $u\colon T_{m^{2}}(q) \to T_{m^{2}}(q)$. We have $u(1) = 1$ and $u(h)=h^{d}$ for some $d \in \{ 0, 1,\dots, m-1 \}$. Now we make use of the compatibility condition~\eqref{C3}
\begin{gather*}
u\big(h^{2}\big) = u(h) \big(p(h) \triangleright ' u(h)\big) = h^{d} \big(g^{c} \triangleright ' h^{d}\big) = h^{2d}.
\end{gather*}
As before, using induction and~\eqref{C3} we get $u(h^{j}) = h^{jd}$ for any $j \in \mathbb{N}$. Lemma~\ref{primitive} gives $u(x) \in
\mathcal{P}_{h^{d}, 1} \big(T_{m^{2}}(q)\big) = \begin{cases}
\alpha(h^{d}-1), & \text{if } d \neq 1,\\
\alpha(h-1)+ \gamma x, & \text{if } d=1,
\end{cases}$ for some $\alpha, \gamma \in K$. Suppose first that $d \neq 1$ and thus $u(x) = \alpha(h^{d}-1)$. Then we have
\begin{gather*}
u(hx) \stackrel{\eqref{C3}} {=} u(h) \big(p(h) \triangleright ' u(x)\big) = h^{d} \big(g^{c} \triangleright ' \alpha\big(h^{d}-1\big)\big) = \alpha h^{d} \big(h^{d}-1\big),\\
u(xh) \stackrel{\eqref{C3}} {=} u\big(x_{(1)}\big) \big(p\big(x_{(2)}\big) \triangleright ' u(h)\big) = u(x) \big(p(h) \triangleright ' u(h)\big) + p(x)
\triangleright ' u(h) = \alpha h^{d} \big(h^{d}-1\big).
\end{gather*}
Now $u(xh) = q u(hx)$ gives $\alpha (q-1) h^{d}(h^{d}-1) = 0$ and since $q \neq 1$ we obtain $\alpha(h^{d}-1) = 0$. Therefore $u(x) = 0$. However, we will see that in this case the corresponding Hopf algebra morphism $\psi_{(u, p, r, v)}$ is not an isomorphism. Indeed we have
\begin{gather*}
\psi_{(u, p, r, v)}(x \# 1) = u\big(x_{(1)}\big) \big(p\big(x_{(2)}\big) \triangleright ' r(1)\big) \# p(x_{(3)}) v(1),\\
\hphantom{\psi_{(u, p, r, v)}(x \# 1)}{} = u\big(x_{(1)}\big) \# p\big(x_{(2)}\big)= u(x) \# p(h) + 1 \# p(x) = 0,
\end{gather*}
which is a contradiction. Therefore we must have $d = 1$ and $u(x) = \alpha (h-1) + \gamma x$. Hence, we obtain
\begin{gather*}
u(hx) \stackrel{\eqref{C3}} {=} u(h) \big(p(h) \triangleright '
u(x)\big) = h \big[g^{c} \triangleright ' \big(\alpha(h-1)+
\gamma x \big)\big] = \alpha h(h-1)+ \gamma \xi^{t'c} hx,\\
u(xh) \stackrel{\eqref{C3}} {=} u\big(x_{(1)}\big) \big(p\big(x_{(2)}\big)
\triangleright ' u(h)\big) = u(x) \big(p(h) \triangleright ' u(h)\big) + p(x) \triangleright ' u(h) \\
\hphantom{u(xh)}{} \ = \big[\alpha(h-1) + \gamma x\big](g^{c} \triangleright ' h) = \alpha h(h-1) + \gamma q hx.
\end{gather*}
Now $u(xh) = q u(hx)$ gives $\alpha = q \alpha$ and $q \gamma \xi^{t'c} = q \gamma$. As $q \neq 1$ we obtain $\alpha = 0$. Thus $\gamma \neq 0$ (otherwise we would have $u(x) = 0$ which leads to the same contradiction as in the previous case) and so $\xi^{t'c} = 1$. Hence, $u$ is given as follows
\begin{gather*}
u(h^{i}) = h^{i}, \qquad u\big(x^{i}\big) = \gamma^{i} x^{i}, \qquad \gamma \in K^{*}, \qquad i \in \{0, 1,\dots, m-1\}.
\end{gather*}
Putting all together, the quadruple $(u, p, r, v)$ is given as follows for all $i, j \in \mathbb{N}$
\begin{gather}
v\big(g^{i}\big) = g^{is}, \qquad {\rm with} \quad s \in \{ 0,\dots, n-1 \}, \nonumber\\ 
p\big(h^{i}x^{j}\big) = g^{ic} \delta_{0, j}, \qquad {\rm with} \quad c \in \{ 0,\dots, n-1 \} \quad \text{such that} \quad n \,|\, mc \quad {\rm and} \quad \xi^{t'c} = 1,\nonumber\\ 
r\big(g^{i}\big) = h^{il},\qquad {\rm with} \quad l \in \{ 0,\dots, m-1 \} \quad \text{such that} \quad m \,|\, nl,\label{cuad3}\\
u\big(h^{i}\big) = h^{i}, \qquad u\big(x^{i}\big) = \gamma^{i} x^{i}, \qquad {\rm with} \quad \gamma \in K^{*}. \nonumber 
\end{gather}
We are still left to check the compatibilities~\eqref{C1},~\eqref{C5} and~\eqref{C6}. Remark that~\eqref{C2} is trivially fulfilled as $K[C_{n}]$ is cocommutative.

\eqref{C1} is trivially fulfilled for $a = h$ while for $a = x$ comes down to the following
\begin{gather*}
 u\big(x_{(1)}\big) \otimes p\big(x_{(2)}\big) = u\big(x_{(2)}\big) \otimes p\big(x_{(1)}\big) \\
\qquad{} \Leftrightarrow \quad u(x) \otimes p(h) + u(1) \otimes p(x) = u(h) \otimes p(x) + u(x) \otimes p(1) \\
\qquad{} \Leftrightarrow \quad \gamma x \otimes g^{c} = \gamma x \otimes 1,
\end{gather*}
and since $\gamma \neq 0$ we obtain $c = 0$ and thus $p$ is the trivial map, i.e., $p(a) = \varepsilon(a) 1$ for all $a \in T_{m^{2}}(q)$. Now~\eqref{C6} is trivially fulfilled while~\eqref{C5} comes down to the following
\begin{gather*}
r\big(t_{(1)}\big) \big(v\big(t_{(2)}\big) \triangleright' u(b) \big) {=} u \big(t_{(1)} \triangleright b\big) r\big(t_{(2)}\big).
\end{gather*}
By setting $t = g$ and $b = x$ we obtain
\begin{gather*}
 r(g)\big(v(g) \triangleright' u(x) \big) = u(g \triangleright x) r(g)\quad
\!\Leftrightarrow\! \quad h^{l} (g^{s} \triangleright' \gamma x) = u\big(\xi^{t} x\big)h^{l}\quad
\!\Leftrightarrow\!\quad \gamma \xi^{st'} h^{l} x = \gamma \xi^{t} q^{l} h^{l} x,\!
\end{gather*}
and thus $\xi^{st' - t} = q^{l}$. This last equality together with $\nu(n) \,|\, n$ gives $m \,|\, nl$. Thus, the compatibility condition in~\eqref{cuad3} is trivially fulfilled.

To summarize, we have proven that if the Hopf algebras $T_{nm^{2}}^ {\xi^{t}}$ and $T_{nm^{2}}^ {\xi^{t'}}$ are isomorphic then there exist $l \in \{ 0, 1,\dots, m-1 \}$ and $s \in \{ 0, 1,\dots, n-1 \}$ such that $\xi^{st' - t} = q^{l}$. In order to complete the first part of the proof we are left to show that $(s, n) = 1$. To this end, $v$ is bijective as a consequence of $\psi$ being bijective. Indeed, if $\psi^{-1}$ is the inverse of $\psi$ then using the same arguments as above one can easily prove that there exists a unitary coalgebra map $\overline{r}\colon K[C_{n}] \to T_{m^{2}}(q)$ and a Hopf algebra map $\overline{v}\colon K[C_{n}] \to K[C_{n}]$ such that for any $z \in K[C_{n}]$ we have $\psi^{-1} (1 \# z) = \overline{r}(z_{(1)}) \# \overline{v}(z_{(2)})$. This gives
\begin{gather*}
1 \# z = \psi \circ \psi^{-1} (1 \# z) = u \big(\overline{r}\big(z_{(1)}\big)\big) r \big(\overline{v}\big(z_{(2)}\big)\big) \# v\big(\overline{v}\big(z_{(3)}\big)\big).
\end{gather*}
By applying $\varepsilon \otimes {\rm Id}_{K[C_{n}]}$ to the above identity yields $v \circ \overline{v}= {\rm Id}_{K[C_{n}]}$. Similarly one can prove that $\overline{v} \circ v= {\rm Id}_{K[C_{n}]}$ and we can conclude that $v$ is indeed an isomorphism. Obviously $v$ is bijective if and only if $(s, n) = 1$.

Assume now that there exist $l \in \{ 0, 1,\dots, m-1 \}$, $s \in \{ 0, 1,\dots, n-1 \}$ such that $\xi^{st' - t} = q^{l}$ and $(s, n) = 1$. Consider two unitary coalgebra maps $u_{\gamma}\colon T_{m^{2}}(q) \to T_{m^{2}}(q)$, $r_{l}\colon K[C_{n}] \to T_{m^{2}}(q)$ and a~Hopf algebra map $v_{s}\colon K[C_{n}] \to K[C_{n}]$ defined as follows for all $i, j \in \{0, 1,\dots, m-1\}$ and $k \in \{0, 1,\dots, n-1\}$
\begin{gather*}
u_{\gamma}\big(h^{i}x^{j}\big) = \gamma^{j} h^{i}x^{j}, \qquad {\rm where}\quad \gamma \in K^{*}, \qquad r_{l}\big(g^{k}\big) = h^{kl}, \qquad v_{s}\big(g^{k}\big) = g^{ks}.
\end{gather*}
We claim that the following map is an isomorphism of Hopf algebras
\begin{gather*}
\psi_{(u_{\gamma}, \varepsilon, r_{l}, v_{s})}\colon \ T_{nm^{2}}^ {\xi^{t}} \to T_{nm^{2}}^ {\xi^{t'}}, \qquad \psi_{(u_{\gamma}, \varepsilon, r_{l}, v_{s})}(a \# y) = u_{\gamma}(a) r_{l}\big(y_{(1)}\big) \# v_{s}\big(y_{(2)}\big).
\end{gather*}
To start with, $\psi_{(u_{\gamma}, \varepsilon, r_{l}, v_{s})}$ is a Hopf algebra map according to the first part of the proof. We only need to show that $\psi_{(u_{\gamma}, \varepsilon, r_{l}, v_{s})}$ is bijective as well. To this end, since $(s, n) = 1$, there exist $\tau,\mu \in \mathbb{Z}$ such that $s \tau + n \mu = 1$. Furthermore, there exist unique integers $\alpha$, $\beta$, $\tau_{1}$, $\tau_{2}$ such that
\begin{gather}\label{impr}
\tau = \alpha n + \tau_{1} \qquad {\rm and} \qquad l \tau = \beta m + \tau_{2},
\end{gather}
where $\tau_{1} \in \{0, 1,\dots, n-1\}$, $\tau_{2} \in \{0, 1,\dots, m-1\}$. It will be worth our while to point out the following easy consequence of~\eqref{impr}
\begin{gather}\label{impr2}
l\tau_{1} = \beta m - \alpha nl + \tau_{2}.
\end{gather}
As $(\tau, n) = 1$ we obviously have $(\tau_{1}, n) = 1$ as well. We will prove that we also have $\xi^{\tau_{1} t - t'} = q^{m-\tau_{2}}$. First recall that, as noticed before, $\xi^{st' - t} = q^{l}$ implies $m \,|\, nl$. Now employing the equalities $\xi^{st' - t} = q^{l}$ and $s \tau + n \mu = 1$ we obtain
\begin{gather*}
\xi^{\tau_{1} t - t'} = \big(\xi^{t}\big)^{\tau_{1}} \xi ^{-t'} = \big(\xi^{st'} q^{-l}\big)^{\tau_{1}} \xi ^{-t'} =
\xi^{(s\tau_{1} -1)t'} q^{-l\tau_{1}} \stackrel{\eqref{impr}} {=} \xi^{(s\tau - s \alpha n -1)t'} q^{-l\tau_{1}} \\
\hphantom{\xi^{\tau_{1} t - t'}}{} = \xi^{(s\tau -1)t'} q^{-l\tau_{1}}= \xi^{-n \mu t'} q^{-l\tau_{1}} = q^{-l\tau_{1}} \stackrel{\eqref{impr2}} {=} q^{-\beta m + \alpha nl - \tau_{2}} \stackrel{m \,|\, nl} {=} q^{- \tau_{2}} = q^{m - \tau_{2}}.
\end{gather*}
Now since $\gamma^{-1} \in k^{*}$, $\tau_{1} \in \{0, 1,\dots, n-1\}$, $m - \tau_{2} \in \{0, 1,\dots, m-1\}$ and $\xi^{\tau_{1} t - t'} = q^{m-\tau_{2}}$, it follows from the first part of the proof that the map $\psi_{(u_{\gamma^{-1}}, \varepsilon, r_{m-\tau_{2}}, v_{\tau_{1}})}\colon T_{nm^{2}}^ {\xi^{t'}} \to T_{nm^{2}}^ {\xi^{t}}$ defined below is a Hopf algebra morphism
\begin{gather*}
\psi_{(u_{\gamma^{-1}}, \varepsilon, r_{m-\tau_{2}}, v_{\tau_{1}})}(a \# y) = u_{\gamma^{-1}}(a) r_{m-\tau_{2}}\big(y_{(1)}\big) \# v_{\tau_{1}}\big(y_{(2)}\big),
\end{gather*}
where $u_{\gamma^{-1}}\colon T_{m^{2}}(q) \to T_{m^{2}}(q)$, $r_{m-\tau_{2}}\colon K[C_{n}] \to T_{m^{2}}(q)$ and $v_{\tau_{1}}\colon K[C_{n}] \to K[C_{n}]$ are given as follows for all $i, j \in \{0, 1,\dots, m-1\}$ and $k \in \{0, 1,\dots, n-1\}$
\begin{gather*}
u_{\gamma^{-1}}\big(h^{i}x^{j}\big) = \gamma^{-j} h^{i}x^{j}, \qquad r_{m-\tau_{2}}\big(g^{k}\big) = h^{k(m-\tau_{2})}, \qquad v_{\tau_{1}}\big(g^{k}\big) = g^{k \tau_{1}}.
\end{gather*}
The proof will be finished once we show that $\psi_{(u_{\gamma^{-1}}, \varepsilon, r_{m-\tau_{2}}, v_{\tau_{1}})}$ is the
inverse of $\psi_{(u_{\gamma}, \varepsilon, r_{l}, v_{s})}$. Now for
all $i, j \in \{0, 1,\dots, m-1\}$ and $k \in \{0, 1,\dots, n-1\}$ we have
\begin{gather*}
\psi_{(u_{\gamma}, \varepsilon, r_{l}, v_{s})} \circ \psi_{(u_{\gamma^{-1}}, \varepsilon, r_{m-\tau_{2}}, v_{\tau_{1}})}\big(h^{i}x^{j} \# g^{k}\big) = \psi_{(u_{\gamma}, \varepsilon, r_{l}, v_{s})} \big(u_{\gamma^{-1}}\big(h^{i}x^{j}\big)r_{m-\tau_{2}}\big(g^{k}\big) \# v_{\tau_{1}}\big(g^{k}\big)\big) \\
\qquad \ {} = \psi_{(u_{\gamma}, \varepsilon, r_{l}, v_{s})} \big(\gamma^{-j} h^{i}x^{j}h^{k (m-\tau_{2})} \# g^{k \tau_{1}}\big)
 = \gamma^{-j} u_{\gamma}\big(h^{i}x^{j}h^{k (m-\tau_{2})}\big) r_{l}\big(g^{k \tau_{1}}\big) \# v_{s}\big(g^{k \tau_{1}}\big) \\
 \qquad \ {} = \gamma^{-j} \gamma^{j} h^{i}x^{j} h^{k (m-\tau_{2})} h^{k \tau_{1} l} \# g^{k \tau_{1} s}
 = h^{i}x^{j} h^{k (\underline{\tau_{1}} l -\underline{ \tau_{2}})} \# g^{k \underline{\tau_{1}} s} \\
\qquad{} \stackrel{\eqref{impr}} {=} h^{i}x^{j} h^{k(\tau l - \alpha n l - l \tau + \beta m)} \# g^{ks(\tau - \alpha n)}
 = h^{i}x^{j} h^{-k \alpha n l } \# g^{ks\tau} \\
 \qquad \ {} = h^{i}x^{j} h^{-k \alpha n l } \# g^{k(1-n \mu)} \stackrel{m \,|\, nl} = h^{i}x^{j} \# g^{k}.
\end{gather*}
Similarly one can prove that $\psi_{(u_{\gamma^{-1}}, \varepsilon, r_{m-\tau_{2}}, v_{\tau_{1}})} \circ \psi_{(u_{\gamma}, \varepsilon, r_{l}, v_{s})} (h^{i}x^{j} \# g^{k}) = h^{i}x^{j} \# g^{k}$ for all $i, j \in \{0, 1,\dots, m-1\}$ and $k \in \{0, 1,\dots, n-1\}$ and the proof is now finished.
\end{proof}

Using Theorem~\ref{izo} and Dirichlet's theorem \cite[Theorem~7.9]{ap} which states that for any positive integers $a$, $b$ such that ${\rm gcd}(a, b) = 1$ the set $\{a+kb \,|\, k \in \mathbb{N}\}$ contains an infinite number of primes, we will be able to provide an explicit formula for computing the number of isomorphism types of Hopf algebras which factorize through $T_{m^{2}}(q)$ and $K[C_n]$.

\begin{Theorem}\label{counting}
Let $\xi$ be a generator of $U_n (K)$ and $d = {\rm gcd}(m,\nu (n))$.
\begin{itemize}\itemsep=0pt
\item[$1)$] For any $t \in \{0, 1,\dots, \nu(n) - 1\}$ there exists an isomorphism of Hopf algebras $T_{nm^{2}}^ {\xi^{t}} \simeq T_{nm^{2}}^ {\xi^{{\rm gcd}\big(t, \frac{\nu(n)}{d}\big)}}$.
\item[$2)$] If $d_{1}$, $d_{2} \,|\, \frac{\nu(n)}{d}$, $d_{1} \neq d_{2}$, then the Hopf algebras $T_{nm^{2}}^ {\xi^{d_{1}}}$ and $T_{nm^{2}}^
{\xi^{d_{2}}}$ are not isomorphic.
\item[$3)$] If $ \frac{\nu(n)}{d} = p_1^{\alpha_1} \cdots p_r^{\alpha_r}$ is the prime decomposition of $ \frac{\nu(n)}{d}$ then the number of types of Hopf algebras that factorize through $T_{m^{2}}(q)$ and $K[C_n]$ is equal to $(\alpha_1 + 1)(\alpha_2 + 1) \cdots (\alpha_r + 1)$.
\end{itemize}
\end{Theorem}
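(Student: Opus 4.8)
The plan is to convert the arithmetic isomorphism criterion of Theorem~\ref{izo} into a statement about orbits of a multiplicative group action, after which all three parts follow quickly. Write $\nu = \nu(n)$ and $M = \nu/d$. The first and decisive step is the reduction: for $t, t' \in \{0,\dots,\nu-1\}$ and $s$ with $\gcd(s,n)=1$, the existence of $l \in \{0,\dots,m-1\}$ with $\xi^{st'-t} = q^{l}$ is equivalent to $st' \equiv t \pmod{M}$. Indeed, $\xi^{st'-t}=q^{l}$ for some $l$ says exactly that $\xi^{st'-t} \in \langle q\rangle$, and since $\xi^{st'-t} \in \langle \xi\rangle$ always, this amounts to $\xi^{st'-t} \in \langle q\rangle \cap \langle\xi\rangle$. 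Now $\langle\xi, q\rangle$ is a finite subgroup of $K^{*}$, hence cyclic, and the intersection of its subgroups of orders $m$ and $\nu$ is its unique subgroup of order $d=\gcd(m,\nu)$, namely $\langle\xi^{\nu/d}\rangle=\langle\xi^{M}\rangle$. Therefore $\xi^{st'-t}\in\langle q\rangle$ holds if and only if $M \mid (st'-t)$, which is the asserted congruence.

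Next I would dispose of the coprimality constraint. Since $M \mid \nu \mid n$, any $s$ with $\gcd(s,n)=1$ is automatically a unit modulo $M$; conversely, given any residue class coprime to $M$, Dirichlet's theorem supplies a prime $s$ in the corresponding arithmetic progression modulo $M$, and after discarding the finitely many primes dividing $n$ one obtains such an $s$ with $\gcd(s,n)=1$. Combined with the reduction above, Theorem~\ref{izo} now reads: $T_{nm^{2}}^{\xi^{t}} \cong T_{nm^{2}}^{\xi^{t'}}$ if and only if $t$ and $t'$ lie in the same orbit of the multiplicative action of $(\mathbb{Z}/M\mathbb{Z})^{*}$ on $\mathbb{Z}/M\mathbb{Z}$ (an isomorphism class depending only on the residues of $t,t'$ modulo $M$).

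The remaining ingredient is the elementary fact that $a, b \in \mathbb{Z}/M\mathbb{Z}$ lie in the same $(\mathbb{Z}/M\mathbb{Z})^{*}$-orbit if and only if $\gcd(a,M)=\gcd(b,M)$, so the orbits are in bijection with the divisors of $M$. All three parts then follow. For (1), $t$ and $\gcd(t,M)$ have the same gcd with $M$, hence the same orbit; the Dirichlet step provides the required $s$ coprime to $n$, yielding the isomorphism $T_{nm^{2}}^{\xi^{t}} \simeq T_{nm^{2}}^{\xi^{\gcd(t,M)}}$. For (2), if $d_1, d_2 \mid M$ and $T_{nm^{2}}^{\xi^{d_1}} \cong T_{nm^{2}}^{\xi^{d_2}}$, then an admissible $s$ is a unit modulo $M$, so $\gcd(d_1,M)=\gcd(d_2,M)$; but $\gcd(d_i,M)=d_i$, forcing $d_1=d_2$. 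Note that this direction uses only the trivial implication $\gcd(s,n)=1 \Rightarrow \gcd(s,M)=1$ and not Dirichlet. For (3), parts (1) and (2) show that the isomorphism types are represented bijectively by the divisors of $M=\nu(n)/d$ (each divisor $e$ being realized, e.g., by $t=e$ or $t=0$), and the number of divisors of $p_1^{\alpha_1}\cdots p_r^{\alpha_r}$ is $(\alpha_1+1)\cdots(\alpha_r+1)$.

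The main obstacle is the reduction lemma of the first paragraph: one must correctly identify $\langle q\rangle \cap \langle\xi\rangle$ with $\langle\xi^{\nu(n)/d}\rangle$ so that the mixed relation $\xi^{st'-t}=q^{l}$ collapses to a single congruence modulo $\nu(n)/d$. Once this is in place, the only genuinely nontrivial tool is Dirichlet's theorem, and even that is needed only to realize an arbitrary unit modulo $M$ as an integer coprime to $n$, i.e., solely in the constructive direction of part (1); the orbit classification and the divisor count are routine.
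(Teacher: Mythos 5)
Your proposal is correct, and it reorganizes the argument along genuinely different lines from the paper, even though both rest on Theorem~\ref{izo} plus Dirichlet. The paper works with the criterion of Theorem~\ref{izo} directly and computationally: for part~1) it applies Dirichlet to the progression $a+kb$ (where $t=au$, $\nu(n)/d=bu$, $u=\gcd\big(t,\tfrac{\nu(n)}{d}\big)$) and then argues by hand that $\xi^{su-t}$, being a root of $X^{d-1}+\cdots+X+1$, must lie in $\big\{1,q,\dots,q^{m-1}\big\}$ because $d\mid m$; for part~2) it derives a contradiction from $\nu(n)\mid m(sd_{1}-d_{2})$ by comparing the exponent of a suitable prime $p$ on the two sides of a divisibility relation. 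Your reduction lemma, identifying $\langle q\rangle\cap\langle\xi\rangle$ with the unique subgroup of order $d$ of the cyclic group $\langle\xi,q\rangle$, namely $\big\langle\xi^{M}\big\rangle$ with $M=\nu(n)/d$, packages both of these computations at once: the paper's polynomial argument in part~1) is precisely the containment $\big\langle\xi^{M}\big\rangle\subseteq\langle q\rangle$, while its prime-valuation argument in part~2) is subsumed by the observation that multiplication by a unit of $\mathbb{Z}/M\mathbb{Z}$ preserves $\gcd(\,\cdot\,,M)$. After the reduction, everything becomes orbit-counting for $(\mathbb{Z}/M\mathbb{Z})^{*}$ acting on $\mathbb{Z}/M\mathbb{Z}$, and the three parts fall out of the uniform criterion $T_{nm^{2}}^{\xi^{t}}\cong T_{nm^{2}}^{\xi^{t'}}$ if and only if $\gcd(t,M)=\gcd(t',M)$; both proofs use Dirichlet in the identical spot (a prime in a progression modulo $M$ avoiding the divisors of $n$, then reduced modulo $n$), and both part~2)'s avoid it. What your route buys is conceptual economy, a much shorter part~2), and a clear delineation of exactly where Dirichlet is needed (only the constructive direction). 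What it costs is reliance on two standard facts you state without proof: the classification of $(\mathbb{Z}/M\mathbb{Z})^{*}$-orbits by $\gcd$ with $M$, whose nontrivial direction requires surjectivity of $(\mathbb{Z}/M\mathbb{Z})^{*}\to(\mathbb{Z}/e\mathbb{Z})^{*}$ for $e\mid M$, and the subgroup-lattice facts for cyclic groups; these are elementary and true, but a complete write-up should include a line for each, whereas the paper's longer computations are self-contained.
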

\begin{proof} 1) Let $u = {\rm gcd}\left(t, \frac{\nu(n)}{d}\right)$. Thus we can write $t = au$, $ \frac{\nu(n)}{d} = bu$ with ${\rm gcd}(a, b) = 1$. By Dirichlet's theorem, the set $\{a+kb \,|\, k \in \mathbb{N}\}$ contains an infinite number of primes. In particular, there exists a prime number $s' \in \{a+kb \,|\, k \in \mathbb{N}\}$ such that ${\rm gcd}(s', n) = 1$ (otherwise, we would have $s' \,|\, n$ for an infinite number of primes). In fact, we can assume without loss of generality that there exists $s \in \{0, 1,\dots, n-1\}$ such that $(s, n) = 1$. Indeed, if $s' \notin \{0, 1,\dots, n-1\}$ then we can write $s' = s + nk_{1}$ for some $k_{1} \in \mathbb{N}$, $s \in \{0, 1,\dots, n-1\}$. Now ${\rm gcd}(s', n) = 1$ yields ${\rm gcd}(s, n) = 1$ as well. Since $s' \in \{a+kb \,|\, k \in \mathbb{N}\}$ we have $s' = a + k_{2}b$ for some $k_{2} \in \mathbb{N}$. Therefore, $s' = a + k_{2}b = s + nk_{1}$ which implies $b \,|\, s-a$ and by multiplying with~$u$ we obtain $ \frac{\nu(n)}{d} \,|\, su-t$, i.e., $\nu(n) \,|\, d(su-t)$. If $\nu(n) \,|\, su-t$ then $\xi^{su-t}=1$ and the conclusion follows from Theorem~\ref{izo} by considering $l=0$. Assume now that $\nu(n) \nmid su-t$ and $\nu(n) \,|\, d(su-t)$; in particular this implies that $d \geq 2$. Then we have $\xi^{d(su-t)} = 1$ which can be written equivalently as follows
\begin{gather*}
\big(\xi^{su-t} - 1\big)\big[\big(\xi^{su-t}\big)^{d-1} + \big(\xi^{su-t}\big)^{d-2} + \cdots + \xi^{su-t} +1\big] =0.
\end{gather*}
However, $\xi ^{su-t} \neq 1$ for otherwise we would have $\nu(n) \,|\, su-t$ which contradicts our assumption. Therefore, we have $(\xi^{su-t})^{d-1} + (\xi^{su-t})^{d-2} + \cdots + \xi^{su-t} +1 = 0$. Hence, $\xi^{su-t}$ is a solution of the equation $X^{d-1} + X^{d-2} + \cdots + X + 1 = 0$. The field $K$ contains all roots of unity of order $m$ as a consequence of containing the primitive root $q$ and therefore the equation $Y^{m} = 1$ has $m$ solutions, namely $1, q, q^{2},\dots, q^{m-1}$. Now since in particular we have $d \,|\, m$, the solutions of the equation $X^{d-1} + X^{d-2} + \cdots + X + 1 = 0$ are contained in the set $\{1, q, q^{2}, \dots, q^{m-1}\}$. Since we proved that $\xi^{su-t}$ is such a~solution, there exists some $l \in \{ 0,\dots, m-1 \}$ such that $\xi^{su-t} = q^{l}$. The desired conclusion now follows from Theorem~\ref{izo}.

2) Since $d_{1} \neq d_{2}$ there exists a prime number $p$ and two positive integers $\alpha, \beta \in \mathbb{N}$, $\alpha > \beta$, such that
\begin{gather*}
p^{\alpha} \,|\, d_{1}, \qquad p^{\beta} \,|\, d_{2}, \qquad p^{\alpha+1} \nmid d_{1}, \qquad p^{\beta+1} \nmid d_{2}.
\end{gather*}
Suppose now that the Hopf algebras $T_{nm^{2}}^ {\xi^{d_{1}}}$ and $T_{nm^{2}}^ {\xi^{d_{2}}}$ are isomorphic. By Theorem~\ref{izo} there
exists $l \in \{ 0, 1,\dots, m-1 \}$, $s \in \{ 0, 1, \dots, n-1 \}$ such that $(s, n) = 1$ and $\xi^{sd_{1} - d_{2}} = q^{l}$. In particular we obtain that $\nu(n) \,|\, m(sd_{1} - d_{2})$, i.e., $m(sd_{1} - d_{2}) = t \nu(n)$ for some $t \in \mathbb{Z}$. This comes down to the following
\begin{gather}\label{1.1.1}
 \frac{m}{d} \frac{d_{2}}{p^{\beta}} = \frac{m}{d} \frac{d_{1}}{p^{\beta}} s - \frac{\nu(n)}{dp^{\beta}} t.
\end{gather}
Since $\alpha > \beta$ we obtain $p \,|\, \frac{d_{1}}{p^{\beta}}$. Moreover, as $ d_{1} \,|\, \frac{\nu(n)}{d}$ we also have $p \,|\,\frac{d_{1}}{p^{\beta}} \,|\, \frac{\nu(n)}{dp^{\beta}}$ and therefore $p$ divides the right hand side of~\eqref{1.1.1}. Now $p \nmid
 \frac{m}{d}$. Indeed, if $p \,|\, \frac{m}{d}$, using $p \,|\, d_{1} \,|\, \frac{\nu(n)}{d}$ we would obtain $p \,|\, {\rm gcd}\big( \frac{m}{d}, \frac{\nu(n)}{d}\big) = 1$ which is a~contradiction. As we also have $p \nmid \frac{d_{2}}{q^{\beta}}$ we can conclude that $p
\nmid \frac{m}{d} \frac{d_{2}}{p^{\beta}}$. Therefore, $p$ does not divide the left hand side of~\eqref{1.1.1} and we have reached a~contradiction. Hence the Hopf algebras $T_{nm^{2}}^ {\xi^{d_{1}}}$ and $T_{nm^{2}}^ {\xi^{d_{2}}}$ are not isomorphic.

3) We already proved in 1) that any Hopf algebra $T_{nm^{2}}^{\xi^{t}}$ is isomorphic to $T_{nm^{2}}^ {\xi^{t'}}$, for some divisor~$t'$ of $ \frac{\nu(n)}{d}$. Moreover, by~2), if $d_1$ and $d_2$ are two distinct positive divisors of $ \frac{\nu(n)}{d}$ then $T_{nm^{2}}^ {\xi^{d_{1}}}$
and~$T_{nm^{2}}^ {\xi^{d_{2}}}$ are not isomorphic. Therefore, the set of types of Hopf algebras that factorize through~$T_{m^{2}}(q)$ and~$K[C_n]$ is in bijection with the set of all Hopf algebras~$T_{nm^{2}}^ {\xi^{t}}$, where $t$ is running over all positive divisors of~$ \frac{\nu(n)}{d}$.
\end{proof}

We end our investigation on the Hopf algebras which factorize through $T_{m^{2}}(q)$ and $K[C_n]$ by describing their automorphism group. To this end, for any $t \in \{0, 1,\dots, \nu - 1\}$ we define the set
\begin{gather*}
S^{t}_{m, n}(q) = \big\{\big(\overline{l}, \widehat{s}\big) \in (\mathbb{Z}_{m}, +) \times (U(\mathbb{Z}_{n}), \cdot) \,|\, \xi^{t(s-1)} = q^{l}\big\}.
\end{gather*}
$S^{t}_{m, n}(q)$ is actually a group with multiplication given for any $(\overline{l}, \widehat{s}), (\overline{l'},\widehat{s'}) \in S^{t}_{m, n}(q)$ as follows
\begin{gather*}
\big(\overline{l}, \widehat{s}\big)\big(\overline{l'}, \widehat{s'}\big) = \big(\overline{l+sl'}, \widehat{ss'}\big).
\end{gather*}
Indeed, $(\overline{0}, \widehat{1}) \in S^{t}_{m, n}(q)$ is the unit of the group while the inverse of some $(\overline{l}, \widehat{s}) \in S^{t}_{m, n}(q)$ is given by $(\overline{l}, \widehat{s})^{-1} = (\overline{-ls'}, \widehat{s'})$ where $\widehat{s'} =\widehat{s}^{-1}$ in $(U(\mathbb{Z}_{n}), \cdot)$. We will only prove that if $(\overline{l}, \widehat{s})$, $(\overline{l'},\widehat{s'}) \in S^{t}_{m, n}(q)$ then $(\overline{l+sl'}, \widehat{ss'}) \in S^{t}_{m, n}(q)$ as well. Indeed, as $(\overline{l}, \widehat{s})$, $(\overline{l'}, \widehat{s'})
\in S^{t}_{m, n}(q)$ we have $\xi^{t(s-1)} = q^{l}$ and $\xi^{t(s'-1)} = q^{l'}$. This gives $\xi^{t(s+s'-2)} = q^{l+l'}$. Moreover, we also have $\xi^{t(s'-1)(s-1)} = q^{l'(s-1)}$ which comes down to $\xi^{t(ss'- s - s' + 1)} = q^{l'(s-1)}$. By multiplying these two compatibilities yields $\xi^{t(ss'-1)} = q^{l+sl'}$ and therefore $(\overline{l+sl'}, \widehat{ss'}) \in S^{t}_{m, n}(q)$ as desired.

\begin{Theorem}\label{graut} For any $t \in \{0, 1,\dots, \nu - 1\}$ there exists an isomorphism of groups
\begin{gather*}
{\rm Aut} _{\rm Hopf}\big(T_{nm^{2}}^ {\xi^{t}}\big) \simeq K^{*}\times S^{t}_{m, n}(q).
\end{gather*}
\end{Theorem}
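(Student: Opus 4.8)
The goal is to identify $\mathrm{Aut}_{\mathrm{Hopf}}(T_{nm^2}^{\xi^t})$ with $K^*\times S^t_{m,n}(q)$. The natural strategy is to specialize the classification of Hopf algebra morphisms between smash products from Theorem~\ref{toatemorf} to the case where the source and target coincide, i.e.\ $t'=t$, and then restrict to those morphisms which are bijective. First I would invoke the computation already carried out in the proof of Theorem~\ref{izo}: every Hopf algebra endomorphism of $T_{nm^2}^{\xi^t}$ arises from a quadruple $(u_\gamma,\varepsilon,r_l,v_s)$ of the explicit form in~\eqref{cuad3}, where $u_\gamma(h^ix^j)=\gamma^j h^ix^j$ with $\gamma\in K^*$, $r_l(g^k)=h^{kl}$ with $l\in\{0,\dots,m-1\}$, and $v_s(g^k)=g^{ks}$ with $s\in\{0,\dots,n-1\}$, subject to the single surviving compatibility $\xi^{st-t}=q^l$, i.e.\ $\xi^{t(s-1)}=q^l$. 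By the second half of the proof of Theorem~\ref{izo}, such an endomorphism is bijective precisely when $(s,n)=1$. Thus the underlying set of $\mathrm{Aut}_{\mathrm{Hopf}}(T_{nm^2}^{\xi^t})$ is in bijection with the set of triples $(\gamma,\overline l,\widehat s)\in K^*\times\mathbb{Z}_m\times U(\mathbb{Z}_n)$ satisfying $\xi^{t(s-1)}=q^l$, which is exactly $K^*\times S^t_{m,n}(q)$.

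**Turning the bijection into a group isomorphism.**
Next I would define the map $\Phi\colon K^*\times S^t_{m,n}(q)\to\mathrm{Aut}_{\mathrm{Hopf}}(T_{nm^2}^{\xi^t})$ sending $(\gamma,(\overline l,\widehat s))$ to $\psi_{(u_\gamma,\varepsilon,r_l,v_s)}$, and verify it is a group homomorphism. The only nontrivial point is the composition law. Using the explicit formula $\psi_{(u_\gamma,\varepsilon,r_l,v_s)}(a\# y)=u_\gamma(a)\,r_l(y_{(1)})\#v_s(y_{(2)})$ from the proof of Theorem~\ref{izo}, I would compute $\psi_{(u_\gamma,\varepsilon,r_l,v_s)}\circ\psi_{(u_{\gamma'},\varepsilon,r_{l'},v_{s'})}$ on the algebra generators $h$, $x$, $g$. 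Tracking the scalar shows the $\gamma$-components multiply to $\gamma\gamma'$; tracking the action of $g^k$ shows $r_l\circ v_s$ composed with $r_{l'}$ produces the exponent $l+sl'$ on $h$ (since $r_l(v_s(g^k))=r_l(g^{ks})=h^{ksl}$ combines with $r_{l'}(g^k)=h^{kl'}$ via the smash multiplication, the cross term picking up the factor $s$ exactly as in the group law of $S^t_{m,n}(q)$), while the $v$-components compose to $v_{ss'}$. This reproduces precisely the multiplication $(\overline l,\widehat s)(\overline{l'},\widehat{s'})=(\overline{l+sl'},\widehat{ss'})$ already shown to make $S^t_{m,n}(q)$ a group, so $\Phi$ is a homomorphism; being a bijection onto the automorphism set, it is the desired isomorphism.

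**Where the difficulty lies.**
The genuinely delicate step is the composition computation, and within it the bookkeeping of the $h$-exponent. The subtlety is that in the smash product the element $r_l(g^k)=h^{kl}$ does \emph{not} commute past the $u$-part in a naive way, so one must be careful to apply $u_\gamma$ and $r_l$ to the correct tensor factors of the comultiplication of $g^k$ and $h^ix^j$, using that $g$ is group-like so that $v_s$ and $r_l$ interact through $\Delta(g^k)=g^k\otimes g^k$. Verifying that the cross term contributes the factor $s$ (rather than $1$) in $l+sl'$ is exactly the asymmetry that makes $S^t_{m,n}(q)$ a semidirect-type product rather than a direct product, and getting this right is the crux; the earlier excerpt's verification that $(\overline{0},\widehat 1)$ is the unit and $(\overline l,\widehat s)^{-1}=(\overline{-ls'},\widehat{s'})$ with $\widehat{s'}=\widehat s^{-1}$ then confirms that $\Phi$ respects units and inverses. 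Finally, I would note that the $K^*$-factor is central and splits off as a direct factor because the scalar $\gamma$ never interferes with the exponents $l,s$ in the composition, justifying the direct product $K^*\times S^t_{m,n}(q)$ rather than a more twisted extension.
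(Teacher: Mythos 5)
Your overall strategy coincides with the paper's: specialize the analysis in the proof of Theorem~\ref{izo} to $t'=t$ to obtain the set-level bijection between automorphisms and triples $(\gamma,\overline{l},\widehat{s})$ with $\gamma\in K^{*}$, $\xi^{t(s-1)}=q^{l}$ and $(s,n)=1$, and then check that composition of automorphisms matches the multiplication of $K^{*}\times S^{t}_{m,n}(q)$. The set-level part of your argument is fine. The gap is in the composition check, and it sits exactly at the point you yourself call the crux. In $\psi_{(u_{\gamma},\varepsilon,r_{l},v_{s})}\circ\psi_{(u_{\gamma'},\varepsilon,r_{l'},v_{s'})}$ the outer $r_{l}$ is applied to the output of the \emph{inner} map's $v$-component, i.e., to $v_{s'}\big(g^{k}\big)=g^{ks'}$, giving $r_{l}\big(g^{ks'}\big)=h^{kls'}$; your formula $r_{l}(v_{s}(g^{k}))=h^{ksl}$ composes $r_{l}$ with the outer map's own $v_{s}$, which is not what occurs. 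Consequently the $h$-exponent produced by the composition is $k(l'+ls')$ (cross term $ls'$), not $k(l+sl')$ (cross term $sl'$) as you assert; the integers $l'+ls'$ and $l+sl'$ are different in general, so the composition law does \emph{not} hold on the nose, contrary to your claim that the cross term picks up ``the factor $s$ exactly as in the group law''.

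What rescues the argument --- and what your proposal is missing --- is the congruence $m\,|\,(l'+ls')-(l+l's)$, valid for any two elements $(\overline{l},\widehat{s}),(\overline{l'},\widehat{s'})\in S^{t}_{m,n}(q)$. It is not automatic: it follows from the defining relations $\xi^{t(s-1)}=q^{l}$ and $\xi^{t(s'-1)}=q^{l'}$ by raising them to the powers $s'-1$ and $s-1$ respectively, which gives $q^{l(s'-1)}=\xi^{t(s-1)(s'-1)}=q^{l'(s-1)}$, hence $m\,|\,l(s'-1)-l'(s-1)$. This is precisely equation~\eqref{autom} in the paper, and since $q$ has order $m$ and $h^{m}=1$, it is exactly what converts the computed exponent $l'+ls'$ (and the accompanying scalar $q^{jk(l'+ls')}$) into the required $l+sl'$ (and $q^{jk(l+sl')}$). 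Note that this congruence amounts to saying that $S^{t}_{m,n}(q)$ is abelian; without it, the correct computation only shows that your map satisfies $\Gamma(\psi\circ\psi')=\Gamma(\psi')\Gamma(\psi)$, i.e., it is an anti-homomorphism. One could in principle still conclude abstractly (every group is isomorphic to its opposite via inversion), but that is not what you argue --- your write-up rests on a cross-term computation that is incorrect as stated. The missing ingredient is concrete: derive and use the congruence~\eqref{autom}.
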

\begin{proof} According to (the proof of) Theorem~\ref{izo}, any Hopf algebra automorphism of $T_{nm^{2}}^ {\xi^{t}}$ has the following form
\begin{gather*}
\psi_{(u_{\gamma}, \varepsilon, r_{l}, v_{s})}(a \# y) = u_{\gamma}(a) r_{l}\big(y_{(1)}\big) \# v_{s}\big(y_{(2)}\big),
\end{gather*}
where $\gamma \in K^{*}$ and for any $i, j \in \{0, 1,\dots, m-1\}$, $k \in \{0, 1,\dots, n-1\}$ we have $u_{\gamma}(h^{i}x^{j}) = \gamma^{j}h^{i}x^{j}$, $r_{l}(g^{k}) = h^{kl}$ and $v_{s}(g^{k}) = g^{ks}$.

The proof will be finished once we show that the following map is an isomorphism of groups
\begin{align*}
\Gamma\colon \ {\rm Aut} _{\rm Hopf}\big(T_{nm^{2}}^ {\xi^{t}}\big) & \to K^{*}\times S^{t}_{m, n}(q), \\
 \psi_{(u_{\gamma}, \varepsilon, r_{l}, v_{s})} & \to \big(\gamma, (\overline{l}, \widehat{s})\big).
\end{align*}
To this end, we only need to prove that $\psi_{(u_{\gamma}, \varepsilon, r_{l}, v_{s})} \circ \psi_{(u_{\gamma '}, \varepsilon, r_{l'}, v_{s'})} = \psi_{(\gamma \gamma ', \varepsilon, l + sl' , ss')}$. We start by showing that $m | (l' + ls') - (l + l's)$ for all $(\overline{l}, \widehat{s})$, $(\overline{l}', \widehat{s}') \in S^{t}_{m, n}(q)$. Indeed, we have $\xi^{t(s-1)} = q^{l}$ and $\xi^{t(s'-1)} = q^{l'}$ which imply that $\xi^{t(s-1)(s'-1)} = q^{l(s'-1)} = q^{l'(s-1)}$. Thus $m | l(s'-1) - l'(s-1)$ and therefore we obtain
\begin{gather}\label{autom}
m | (l' + ls') - (l + l's).
\end{gather}
Now for any $i, j \in \{0, 1,\dots, m-1\}$ and $k \in \{0, 1,\dots, n-1\}$ we have
\begin{gather*}
\psi_{(u_{\gamma}, \varepsilon, r_{l}, v_{s})} \circ \psi_{(u_{\gamma '}, \varepsilon, r_{l'}, v_{s'})}\big(h^{i}
x^{j} \# g^{k}\big) = \psi_{(u_{\gamma}, \varepsilon, r_{l}, v_{s})}\big(u_{\gamma '}\big(h^{i}
x^{j}\big) r_{l'}\big(g^{k}\big) \# v_{s'}\big(g^{k}\big) \big) \\
\qquad \ {} =\psi_{(u_{\gamma}, \varepsilon, r_{l}, v_{s})}\big((\gamma')^{j} h^{i} x^{j}
h^{kl'} \# g^{ks'}\big) = (\gamma')^{j} q^{jkl'} \psi_{(u_{\gamma}, \varepsilon, r_{l}, v_{s})}
\big(h^{i+kl'} x^{j} \# g^{ks'}\big) \\
\qquad \ {} = (\gamma')^{j} q^{jkl'} u_{\gamma}\big(h^{i+kl'} x^{j}\big) r_{l}\big(g^{ks'}\big)
 \# v_{s}\big(g^{ks'}\big) =(\gamma')^{j} q^{jkl'} \gamma^{j} h^{i+kl'} x^{j} h^{kls'} \# g^{kss'} \nonumber\\
\qquad \ {} =(\gamma \gamma')^{j} q^{j(kl' + kls')} h^{i + kl' + kls'} x^{j} \# g^{kss'}
 =(\gamma \gamma')^{j} q^{kj(\underline{l' + ls'})} h^{i + k(\underline{l' + ls'})} x^{j} \# g^{kss'} \\
 \qquad {} \stackrel{\eqref{autom}} {=} (\gamma \gamma')^{j} q^{kj(l +
l's)} h^{i + k(l + l's)} x^{j} \# g^{kss'}
 = (\gamma \gamma')^{j} h^{i} x^{j} h^{k(l + l's)} \# g^{kss'} \\
 \qquad \ {} = \psi_{(\gamma \gamma ', \varepsilon, l + sl', ss')}\big(h^{i} x^{j} \# g^{k}\big)
\end{gather*}
as desired and the proof is now complete.
\end{proof}

\subsection*{Acknowledgements}

Parts of this work were undertaken while the author was visiting the Institut des Hautes \'{E}tudes Scientifiques in Bures-sur-Yvette, France. Their hospitality and the financial support offered by the Jean-Paul Gimon Chair are gratefully acknowledged. Also, we thank the referees for their comments and suggestions that substantially improved the first version of this paper.

\pdfbookmark[1]{References}{ref}
\LastPageEnding

\end{document}